\newcommand{\lvt}{\left|\kern-1.35pt\left|\kern-1.3pt\left|}
\newcommand{\rvt}{\right|\kern-1.3pt\right|\kern-1.35pt\right|}
\newtheorem{thm}{Theorem}[section]
\newtheorem{cor}[thm]{Corollary}
\newtheorem{lem}[thm]{Lemma}
\newtheorem{prop}[thm]{Proposition}
\theoremstyle{remark}
 \def\a{{\alpha}}
 \def\b{{\beta}}
 \def\g{{\gamma}}
 \def\k{{\kappa}}
 \def\t{{\theta}}
 \def\l{{\lambda}}
 \def\d{{\delta}}
 \def\s{\sigma}
 \def\la{{\langle}}
 \def\ra{{\rangle}}
 \def\CD{{\mathcal D}}
 \def\CH{{\mathcal H}}
 \def\CP{{\mathcal P}}
 \def\CV{{\mathcal V}}
 \def\BB{{\mathbb B}}
 \def\RR{{\mathbb R}}
 \def\SS{{\mathbb S}}
 \def\TT{{\mathbb T}}
 \def\ZZ{{\mathbb Z}}
 \def\sph{{\SS^{d-1}}}
 \def\proj{\operatorname{proj}}
\newcommand{\wh}{\widehat}
\def\f{\frac}
\begin{document}

\title[Orthogonal expansions on the unit ball]
 {Orthogonal expansions for generalized Gegenbauer weight function on the unit ball}

\author{Yuan Xu}
\address{Department of Mathematics\\ University of Oregon\\
    Eugene, Oregon 97403-1222.}\email{yuan@uoregon.edu}

\date{\today}
\thanks{The work was supported in part by NSF Grant DMS-1106113}
\keywords{Gegenbauer polynomials, orthogonal polynomials, several variables, reproducing kernel}
\subjclass[2000]{33C45, 33C50, 42C10}

\begin{abstract}
Orthogonal polynomials and expansions are studied for the weight function 
$h_\k^2(x) \|x\|^{2\nu} (1-\|x\|^2)^{\mu-1/2}$ on the unit ball of $\RR^d$, where $h_\k$ is a reflection 
invariant function, and for related weight function on the simplex of $\RR^d$. A concise formula 
for the reproducing kernels of orthogonal subspaces is derived and used to study summability of the
Fourier orthogonal expansions. 
\end{abstract}

\maketitle

\section{Introduction}
\setcounter{equation}{0}

Fourier orthogonal expansions on the unit ball $\BB^d: = \{x \in \RR^d: \|x\| \le 1\}$ of $\RR^d$ have been
studied intensively in recent years (\cite{DaiX, DX}) for the classical weight function
$$
  W_\mu(x):= (1-\|x\|^2)^{\mu -1/2}, \qquad \mu > -1/2,
$$
and, more generally, for the weight functions
$$
  W_{\k,\mu}(x): = h_\k^2(x) (1-\|x\|^2)^{\mu-1/2}, \qquad \mu > -1/2, 
$$
where $h_\k$ is certain weight function that is invariant under a reflection group. Much of the
progress is based on our understanding of orthogonal structure, encapsulated in the concise formulas for the 
reproducing kernels of orthogonal spaces that are integral kernels of orthogonal projection operators. 
These concise formulas serve as an essential tool for studying orthogonal expansions and allow us to define a meaningful 
convolution structure on the unit ball. As an example, let $\CV_n^d(W_\mu)$ be the space of orthogonal 
polynomials of degree $n$ with respect to $W_\mu$ on $\BB^d$. Then the reproducing kernel 
$P_n(W_\mu; \cdot,\cdot)$ of this space satisfies the relation (\cite{X99})
\begin{equation}\label{eq:kernel-Wmu}
   P_n(W_\mu;x,y) = c_\mu \int_{-1}^1 Z_n^{\mu + \frac{d-1}{2}} \left(\la x,y\ra + \sqrt{1-\|x\|^2}  \sqrt{1-\|y\|^2} \, t \right)
    (1-t^2)^{\mu-1} dt,
\end{equation}
where $x,y\in \BB^d$, $c_\mu$ is a the normalization constant so that the integral is 1 when $n =0$ and $Z_n^\l$ 
is a multiple of the Gegenbauer polynomial $C_n^\l$, defined by 
\begin{equation}\label{eq:Zn}
     Z_n^\l(t) := \frac{n+\l}{\l} C_n^\l(t), \qquad \l > 0, \quad -1 \le t \le 1. 
\end{equation}
The orthogonal structure on the unit ball is closely related to that on the unit sphere, so much so that the results 
on the ball can be deduced from the theory of $h$-harmonics with respect to the reflection group. The concise 
formula \eqref{eq:kernel-Wmu} plays the role of the reproducing kernel (zonal harmonic) for spherical harmonics. 

In the present paper, we consider the weight function of the form
$$
  W_{\k,\mu,\nu}(x): = h_\k^2(x) \|x\|^{2 \nu} (1-\|x\|^2)^{\mu-1/2},  
$$
which we shall call the generalized Gegenbauer weight function on the ball and we shall write $W_{\mu,\nu}: =
 W_{0,\mu,\nu}$
when $h_\k(x) \equiv 1$. The additional factor  $\|x\|^{2 \nu}$, which introduces a singularity at the origin of the 
unit ball, breaks down the connection to the theory of $h$-harmonics. Some properties already established 
for $W_{\k,\mu,0}$ do not extend to the setting of $W_{\k,\mu,\nu}$; for example, orthogonal polynomials for
$W_{\mu,\nu}$ are no longer eigenfunctions of a second order linear differential operators. On the other hand, 
a basis of orthogonal polynomials can still be deduced in polar coordinates and we can still deduce a concise 
formula for the reproducing kernel, based on an integral relation for the Gegenbauer polynomials discovered 
recently in \cite{X14}. The latter was derived for $W_{\mu,\nu}$ in \cite{X14}, it motivates our study here
and opens the possibility of carrying out analysis on the ball with respect to the weight function $W_{\k,\mu,\nu}$. 
Our goal in this paper is to explore what is still possible and what might be amiss. 

There is a close relation between orthogonal structure on the unit ball and the standard simplex 
of $\RR^d$, which allows us to consider orthogonal polynomials and expansions for the weight functions such as 
$$
  U_{\k,\mu,\nu}(x):= \prod_{i=1}^d x_i^{\k_i-\f12} |x|^{\nu} (1-|x|)^{\mu-\f12}, \qquad |x|: = x_1+\ldots + x_d, 
$$
on the simplex $\TT^d =\{x\in \RR^d: x_1 \ge 0, \ldots, x_d\ge 0, |x| \le 1\}$.

The paper is organized as follows. In the next section we recall necessary definitions and study orthogonal 
polynomials with respect to $W_{\k,\mu,\nu}$ on the ball. The concise formula for the reproducing kernel 
and orthogonal expansions are studied in the third section. The orthogonal structure and expansion on the
simplex is studied in the fourth section.
 
\section{Orthogonal polynomials on the unit ball}
\setcounter{equation}{0}
 
We start with the definition of the weight function $h_\k$. Let $G$ be a finite reflection group with a fixed 
positive root system $R_+$. Let $\sigma_v$ denote the reflection along $v \in R_+$, that is, $x \sigma_v 
=  x - 2\langle x, v\rangle/\|v\|^2$ for $x \in \RR^d$, where $\langle \cdot,\cdot \rangle$ denote the usual 
Euclidean inner product of $\RR^d$. Let $\kappa: R_+ \mapsto \RR$  be a multiplicity function defined on $R_+$,
which is a $G$-invariant function, and we assume that $\kappa(v) \ge 0$ for all $v \in R_+$. Then the function
\begin{equation}\label{eq:h-weight}
 h_\kappa(x) = \prod_{v \in R_+} |\langle x, v \rangle|^{\kappa(v)}, 
        \qquad x \in \RR^d,
\end{equation}
is a positive homogeneous $G$-invariant function of order $\gamma_\kappa := \sum_{v \in R_+} \kappa_v$. 
The simplest case is when $G = \ZZ_2^d$ for which 
\begin{equation}\label{eq:h-Z2d}
 h_\kappa(x) = \prod_{i=1}^d |x_i|^{\kappa_i}, \qquad \k_i \ge 0. 
\end{equation} 

We consider orthogonal polynomials for the weight function $W_{\kappa,\nu,\mu}$ on the unit ball
\begin{equation}\label{eq:Wball}
  W_{\kappa,\mu,\nu}(x) = h_\kappa^2(x) \|x\|^{2\nu} (1-\|x\|^2)^{\mu-1/2},  
  \quad \mu > -1/2, \quad \nu + \g_\k + d/2 > 0,
\end{equation}
where $h_\kappa$ is as in \eqref{eq:h-weight}. It is easy to verify, in polar
coordinates, that restrictions on $\mu$ and $\nu$ guarantee that this weight function is integrable on $\BB^d$.
We further denote $W_{\mu,\nu}:= W_{0,\mu,\nu}$ and $W_\mu : = W_{\mu,0}$. With respect to $W_{\k,\mu,\nu}$ 
we define an inner product 
\begin{equation}\label{eq:ipd}
\la f, g\ra_{\k,\mu,\nu}: = b_{\k,\mu,\nu}\int_{\BB^d} f(x) g(x)   W_{\k,\mu,\nu}(x)  dx, 
\end{equation}
where $b_{\k,\mu,\nu}$ is the normalization constant such that $\la 1, 1 \ra_{\k,\mu,\nu} =1$. Let $\Pi_n^d$ denote 
the space of polynomials of degree at most $n$ in $d$ variables.  A polynomial $P \in \Pi_n^d$ of degree $n$ is 
called an orthogonal polynomial with respect to $W_{\k,\mu,\nu}$ if $\la P, Q\ra_{\k,\mu,\nu} =0$ for all polynomials 
$Q \in \Pi_{n-1}^d$.  Let $\CV_n^d(W_{\k,\mu,\nu})$ be the space of orthogonal polynomials with respect to the inner
Áproduct \eqref{eq:ipd}. Then $\dim \Pi_n^d = \binom{n+d-1}{n}$. A basis $\{P_{j,n}\}$ for $\CV_n^d(W_{\k,\mu,\nu})$ 
is called mutually orthogonal if $\la P_{j,n}, P_{k,n}  \ra_{\k,\mu,\nu} =0$ whenever $j \ne k$ and it is called 
orthonormal if, in addition,  $\la P_{j,n}, P_{j,n}  \ra_{\k,\mu,\nu} =1$. There are many different bases for the space
$\CV_n^d(W_{k,\mu,\nu})$. The structure of the weight function suggests a particular mutually orthogonal basis that 
can be constructed explicitly. To state this basis, we need $h$-spherical harmonics defined by Dunkl,  which generalize
ordinary spherical harmonics. 

Associated with $G$ and $\k$, the Dunkl operators, $\CD_1,\ldots, \CD_d$, are first order difference-differential 
operators defined by (\cite{D})
$$
   \CD_i f(x) = \partial_i f(x) + \sum_{v\in R_+} \k(v) \frac{f(x) - f(x\s_v)}{\la x,v\ra} v_i,
$$
where $v = (v_1,\ldots, v_d)$ and $x \s_v:= x - 2 \la x,v\ra v/\|v\|^2$. This family of operators enjoys a remarkable 
commutativity, $\CD_i\CD_j = \CD_j\CD_i$, which leads to the definition of the $h$-Laplacian defined by 
$\Delta_h: = \CD_1^2+\ldots +\CD_d^2$. An $h$-harmonic is a homogeneous polynomial that satisfies 
$\Delta_h = 0$ and its restriction on the unit sphere $\sph$ is called spherical $h$-harmonics, which becomes 
ordinary spherical harmonic when $\k =0$. Let $\CH_n^d(h_\kappa^2)$ be the space of $h$-harmonic polynomials 
of degree $n$. For $n \ne m$, it is known that 
$$
 \la Y_n^h, Y_m^h\ra_\k:= b_\k \int_\sph Y_n^h(x)Y_m^h(x) h_\k^2(x) d\s  = 0, \qquad Y_n \in \CH_n^d(h_\k^2), \quad Y_m \in \CH_m^d(h_\k^2), 
$$
where $d\s$ denotes the surface measure on $\sph$ and $b_\k$ is the normalization constant such that 
$\la 1,1\ra_\k =1$. In polar coordinates, the $h$-Laplacian can be written as 
\begin{equation} \label{eq:Delta-pola}
  \Delta_h = \frac{\partial^2}{\partial r^2} + \frac{2\lambda_\k+1}{r} 
      \frac{\partial}{\partial r} + \frac{1}{r^2} \Delta_{h,0},   \qquad   \l_\k := \g_k + \f{d-2}{2},
\end{equation}
where $r = \|x\|$ and $\Delta_{h,0}$ is the spherical part of the $h$-Laplacian, 
which has $h$-harmonics as eigenfunctions. More precisely, if $Y_n^h \in \CH_n^d(h_\kappa^2)$, then
\begin{equation} \label{eq:h-harmonic-eigen}
  \Delta_{h,0} Y_n^h(x) = -n(n+2\lambda_\k) Y_n^h(x).
\end{equation}
In the case of $h_\k(x) =1$, $\Delta_h$ becomes the ordinary Laplacian and $\Delta_{h,0}$ becomes the 
Laplace-Beltrami operator. 

The $h$-harmonics can be used as building blocks of orthogonal polynomials on the unit ball. 
Let $\s_m^d : = \dim \CH_m^d(h_\k^2)$ and let $\{Y_{\ell,m}^h: 1 \le \ell \le \s_m^d\}$ be an orthonormal basis 
of $\CH_m^d(h_\k^2)$, normalized with respect to $\la \cdot,\cdot\ra_\k$, and let $P_n^{(\a,\b)}(t)$ denote the 
usual Jacobi polynomial of degree $n$. Define 
\begin{equation}\label{eq:OPbasis}
   P_{j,\ell}^n(x) : =   P_{j,\ell}^n(W_{\k,\mu,\nu}; x) =
    P_n^{(\mu-\f12, n-2j+\nu+\l_\k)} (2\|x\|^2-1) Y_{\ell, n-2j}^h(x).
\end{equation}

\begin{prop} \label{prop:basis}
The set $\{P_{j,\ell}^n: 1 \le \ell \le \s_{n-2j}^d, 0 \le j \le n/2\}$ is a mutually orthogonal basis of 
$\CV_n^d(W_{\k,\mu,\nu})$ and the norm of $P_{j,\ell}^n$ is given by 
$$
 \la P_{j,\ell}^n, P_{j,\ell}^n \ra_{\k,\mu,\nu} = 
 \frac{ (\nu + \g_\k +\frac{d}{2})_{n-j} (\mu+\f12)_j (n-j+\nu +\mu+ \g_\k+\f{d-1}{2})}
      { j! (\nu +\mu+ \g_\k+\f{d+1}{2})_{n-j}  (n+\nu +\mu+ \g_\k+\f{d-1}{2})}
       =:   H_j^n, 
$$
where $(a)_n$ denotes the Pochhammer symbol, $(a)_n := a(a+1)\cdots (a+n-1)$.
\end{prop}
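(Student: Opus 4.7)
My plan is to work in polar coordinates $x = r x'$ with $r = \|x\|$ and $x' \in \sph$. Since $h_\k(rx') = r^{\g_\k}h_\k(x')$ and each $Y_{\ell,m}^h$ is homogeneous of degree $m$, the inner product $\la P_{j,\ell}^n, Q\ra_{\k,\mu,\nu}$ factors as the product of a radial integral on $[0,1]$ with weight $r^{2\g_\k+2\nu+d-1}(1-r^2)^{\mu-1/2}$ and an angular integral over $\sph$ with weight $h_\k^2(x')\,d\s(x')$. The substitution $s = 2r^2-1$ turns the radial weight into the Jacobi weight $(1-s)^{\mu-1/2}(1+s)^{n-2j+\nu+\l_\k}$, which is exactly the weight against which the Jacobi factor $P_j^{(\mu-1/2,\,n-2j+\nu+\l_\k)}(s)$ (of degree $j$, so that $P_{j,\ell}^n$ has total degree $2j+(n-2j)=n$) is orthogonal to lower-degree polynomials.

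To show $P_{j,\ell}^n \in \CV_n^d(W_{\k,\mu,\nu})$, I would use the $h$-harmonic decomposition $\Pi_k^d = \bigoplus_{2i+m\le k}\|x\|^{2i}\CH_m^d(h_\k^2)$ to reduce the test to functions of the form $Q(x) = \|x\|^{2k}\wt Y_m^h(x)$ with $2k+m\le n-1$. The angular integral vanishes unless $m = n-2j$, in which case $2k+m\le n-1$ forces $k\le j-1$. After $s = 2r^2-1$ the radial integral becomes a constant times
\begin{equation*}
\int_{-1}^1 P_j^{(\mu-1/2,\,n-2j+\nu+\l_\k)}(s)\,(1+s)^k\,(1-s)^{\mu-1/2}(1+s)^{n-2j+\nu+\l_\k}\,ds,
\end{equation*}
which vanishes by Jacobi orthogonality since $(1+s)^k$ has degree $k<j$.

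For mutual orthogonality within $\{P_{j,\ell}^n\}$ at fixed $n$, distinct indices $(j,\ell)\ne(j',\ell')$ are handled by the angular integral: either $n-2j\ne n-2j'$ and the spherical integral vanishes, or $n-2j=n-2j'$ with $\ell\ne\ell'$ and orthonormality of $\{Y_{\ell,n-2j}^h\}$ does the job. The cardinality $\sum_{j=0}^{\lfloor n/2\rfloor}\s_{n-2j}^d$ telescopes through $\s_m^d=\binom{m+d-1}{d-1}-\binom{m+d-3}{d-1}$ to $\binom{n+d-1}{d-1}=\dim\CV_n^d(W_{\k,\mu,\nu})$, confirming completeness.

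For the norm formula, setting $\a=\mu-1/2$ and $\b=n-2j+\nu+\l_\k$ and invoking the classical identity
\begin{equation*}
\int_{-1}^1\bigl[P_j^{(\a,\b)}(s)\bigr]^2(1-s)^\a(1+s)^\b\,ds=\frac{2^{\a+\b+1}}{2j+\a+\b+1}\cdot\frac{\Gamma(j+\a+1)\Gamma(j+\b+1)}{j!\,\Gamma(j+\a+\b+1)},
\end{equation*}
and combining with the value of $b_{\k,\mu,\nu}$ determined from $\la 1,1\ra_{\k,\mu,\nu}=1$ (extracted from the $n=j=0$ case) produces the claimed $H_j^n$. The main chore is the purely symbolic simplification recasting the resulting ratio of Gammas into Pochhammer form; the numerator factor $(n-j+\nu+\mu+\g_\k+\tfrac{d-1}{2})$ in particular arises from the identity $\Gamma(c+1)/\Gamma(c+n-j)=(c+n-j)/(c+1)_{n-j}$ applied with $c=\nu+\mu+\g_\k+\tfrac{d-1}{2}$. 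This bookkeeping, though routine, is where I would expect any slip to occur.
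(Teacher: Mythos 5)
Your proposal is correct and takes essentially the same approach as the paper: the paper's proof also factors the inner product in polar coordinates into a radial integral against $r^{d-1+2\g_\k+2\nu}(1-r^2)^{\mu-1/2}$ and an angular integral against $h_\k^2\,d\s$, then invokes the orthogonality of $h$-spherical harmonics and of Jacobi polynomials, deferring the details to \cite[Prop.~5.2.1]{DX}. You have simply written out those deferred details in full --- the substitution $s=2\|x\|^2-1$, the $h$-harmonic decomposition of $\Pi_{n-1}^d$, the dimension count, and the Jacobi norm computation --- and your bookkeeping (including the factor $(n-j+\nu+\mu+\g_\k+\tfrac{d-1}{2})$) checks out.
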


\begin{proof}
In polar coordinates, it is easy to see that 
$$
 \la f, g\ra_{\k,\mu,\nu} = (b_{\k,\mu,\nu}/b_\k) \int_0^1  \la f(r \cdot), g(r \cdot) \ra_\k
       r^{d-1 + 2 \g_\k+ 2 \nu}(1-r^2)^{\mu-1/2} dr, 
$$
from which the orthogonality of $P_{j,\ell}^n$ follows from the orthogonality of $h$-spherical harmonics and of 
the Jacobi polynomials. The proof is similar to that of classical orthogonal polynomials for $W_\mu$
on the unit ball, the details can be worked out as in \cite[Prop. 5.2.1]{DX}.
\end{proof}

In the case of  $\nu =0$, the orthogonal polynomials are closely related to the $h$-spherical harmonics associated
with $h_\k^2(x) |x_{d+1}|^{2\mu}$ on the sphere $\SS^d$, so much so that it can be deduced from 
\eqref{eq:h-harmonic-eigen} that the orthogonal polynomials in $\CV_n^d(W_{\k,\mu,0}, \BB^d)$ are eigenfunctions
of a second order differential-difference equation;  more precisely, 
\begin{equation}\label{eq:DE-ball}
     D_{\k,\mu} P = - \eta_n^{\k,\mu} P, \qquad  \forall P \in \CV_n^d(W_{\k,\mu,0}, \BB^d),
\end{equation}
where $\eta_n^{\k,\mu} := n (n+2 \l_k + 2\mu  +1)$ and 
$$
  D_{\k,\mu}: = \Delta_h - \la x ,\nabla\ra^2 - (2 \l_\k+2\mu+1) \la x,\nabla\ra. 
$$
This property plays an important role in the study of Fourier orthogonal
expansions on the unit ball; for example, it allows us to define an analogue of the heat kernel operator. One naturally
asks if there is an extension of this property for the case $\nu \ne 0$. 

For this purpose, it is easier to rewrite the basis in \eqref{eq:OPbasis} in terms of the generalized Gegenbauer
polynomials $C_n^{(a,b)}$,  which are orthogonal polynomials with respect to the weight function $|t|^{b}(1-t^2)^{a-1/2}$
on $[-1,1]$ (see \cite[Section 1.5]{DX}). These polynomials satisfy a difference-differential equation that we record 
below.

\begin{prop}
The Generalized Gegenbauer polynomials $C_n^{(a,b)}$ satisfy the equation
$$
   (1-t)^2 y''(t) -(2a + 2 b + 1) t y'(t) + \frac{2 b}{t} \left( y'(t) - \frac{y(t) - y(-t)}{2 t} \right) + n(n+2 a+ 2b)y(t) =0.
$$
\end{prop}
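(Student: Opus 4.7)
The plan is to verify the equation separately on even-indexed and odd-indexed polynomials, exploiting the fact that $C_n^{(a,b)}$ has the same parity as $n$ and its standard identification with Jacobi polynomials in the variable $s = t^2$:
\[
C_{2m}^{(a,b)}(t) \propto P_m^{(a-\frac12,\,b-\frac12)}(2t^2-1), \qquad
C_{2m+1}^{(a,b)}(t) \propto t\, P_m^{(a-\frac12,\,b+\frac12)}(2t^2-1).
\]
The parity split handles the difference term $y(t)-y(-t)$ transparently, and the variable change $s = t^2$ should turn the stated difference-differential equation into the classical Jacobi ODE in each case.

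For $n = 2m$ I set $y(t) = f(s)$ with $s = t^2$. Since $y(t)-y(-t) = 0$ the difference piece drops out, and the chain rule gives $y'(t) = 2tf'(s)$ and $y''(t) = 2f'(s) + 4sf''(s)$. Substituting these and dividing by $4$ should collapse the stated equation to
\[
s(1-s)f''(s) + \bigl[b+\tfrac12 - (a+b+1)s\bigr]f'(s) + m(m+a+b)f(s) = 0,
\]
which is the Jacobi equation for parameters $(a-\tfrac12,\,b-\tfrac12)$ with spectral value $n(n+2a+2b)/4 = m(m+a+b)$, as required.

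For $n = 2m+1$ I set $y(t) = t g(s)$. Now $y(t)-y(-t) = 2y(t)$, so $\frac{y(t)-y(-t)}{2t} = g(s)$, and the crucial point is that the apparently singular combination simplifies to
\[
\frac{2b}{t}\Bigl(y'(t) - \frac{y(t)-y(-t)}{2t}\Bigr) = \frac{2b}{t}\bigl((g+2sg') - g\bigr) = 4bt\,g'(s),
\]
which is regular at $t = 0$. With $y'(t) = g + 2sg'$ and $y''(t) = 6tg' + 4tsg''$, dividing through by $t$ and collecting terms should again produce the Jacobi equation, now with parameters $(a-\tfrac12,\,b+\tfrac12)$ and eigenvalue $m(m+a+b+1)$; the identity $-(2a+2b+1) + n(n+2a+2b) = 4m(m+a+b+1)$ provides the spectral match.

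The main obstacle I anticipate is purely bookkeeping in the odd case, where the constant-in-$g$ contributions from $-(2a+2b+1)t\cdot g$ and from $n(n+2a+2b)\cdot tg$ must combine correctly into the Jacobi eigenvalue after the division by $t$. Once this is checked, no further identities beyond the classical Jacobi equation and the Jacobi-polynomial representation of $C_n^{(a,b)}$ are needed.
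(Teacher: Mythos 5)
Your proof is correct, but it takes a genuinely different route from the paper's. The paper argues structurally: in polar coordinates $(x_1,x_2)=r(\cos\theta,\sin\theta)$, the polynomials $r^n C_n^{(\kappa_2,\kappa_1)}(\cos\theta)$ are $h$-spherical harmonics for the weight $|x_1|^{\kappa_1}|x_2|^{\kappa_2}$ on $\SS^1$, so the stated equation is nothing but the eigenvalue equation \eqref{eq:h-harmonic-eigen} for the spherical part $\Delta_{h,0}$ of the $h$-Laplacian, rewritten in the variable $t=\cos\theta$. That deduction is one line, but it presupposes the Dunkl-operator machinery, and it is what makes the difference term $\frac{2b}{t}\bigl(y'(t)-\frac{y(t)-y(-t)}{2t}\bigr)$ conceptually inevitable, as the reflection part of $\CD_1^2$. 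You instead verify the equation directly against the classical Jacobi ODE via the parity split and the quadratic transformations $C_{2m}^{(a,b)}(t)\propto P_m^{(a-\frac12,b-\frac12)}(2t^2-1)$ and $C_{2m+1}^{(a,b)}(t)\propto t\,P_m^{(a-\frac12,b+\frac12)}(2t^2-1)$, which is elementary, self-contained, and checkable without any Dunkl theory. Your deferred bookkeeping does close: in the even case, with $s=t^2$, division by $4$ yields $s(1-s)f''+[b+\tfrac12-(a+b+1)s]f'+m(m+a+b)f=0$, the Jacobi equation for $(a-\tfrac12,b-\tfrac12)$ shifted to $[0,1]$; in the odd case, after dividing by $t$ (legitimate, since the identity is between polynomials) and then by $4$, the first-order coefficient collects to $b+\tfrac32-(a+b+2)s$ and the zeroth-order one to $\tfrac14\bigl[(2m+1)(2m+1+2a+2b)-(2a+2b+1)\bigr]=m(m+a+b+1)$, matching the Jacobi equation for $(a-\tfrac12,b+\tfrac12)$ exactly as you predicted; your observation that $\frac{2b}{t}(y'-g)=4bt\,g'(s)$ is regular at $t=0$ is the step that makes this case work. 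One caveat: the displayed statement has $(1-t)^2y''(t)$, which is a typo for $(1-t^2)y''(t)$ --- compare the even-case reduction \eqref{eq:DEg-gegen} in the paper --- and your computation silently and correctly uses $(1-t^2)$. In sum, the paper's route buys brevity and a conceptual explanation within the framework it needs anyway, while yours buys an independent, hands-on verification that also makes transparent where the simplification \eqref{eq:DEg-gegen} for even polynomials comes from.
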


In polar coordinates $(x_1,x_2) = r (\cos \t, \sin\t)$, the polynomials $r^n C_n^{\k_2,\k_1}(\cos \t)$ are 
$h$-spherical harmonics associated with $|x_1|^{\k_1}|x_2|^{\k_2}$ on $\SS^1$, so that the above proposition
follows from \eqref{eq:h-harmonic-eigen}. It is known that 
$$
   C_{2n}^{(a,b)}(t) = \frac{(a+b)_n}{(b+\frac12)_n} P_n^{(a-1/2,b-1/2)} (2t^2-1),
$$ 
which are even functions and for which the differential-difference equation in the proposition simplifies to 
\begin{equation}\label{eq:DEg-gegen}
   (1- t^2) y'' - (2a + 2 b + 1) t y' + \frac{2 b}{t} y' + n(n+2 a+ 2b)y =0.
\end{equation}

In terms of $C_{2n}^{(\a,b)}$, the basis \eqref{eq:OPbasis} becomes
$$
  P_{\ell,j}^n(W_{\k,\mu,\nu};x) = c(j) C_{2j}^{(\mu, n-2j + \l_k + \nu+\f12)}(\|x\|) Y_{\ell,n-2j}^h(x),
$$
where $c(j)$ is a constant. The differential-difference equation can be verified using the following lemma.

\begin{lem} \label{lem:diff}
Let $g(x) = p(\|x\|)Y_{n-2j}^h$ with $p$ being a polynomial of one variable and $Y_{n-2j}^h \in \CH_{n-2j}$.
In the polar coordinates $x = r \xi$, $\xi \in \sph$ and $r \ge 0$, 
\begin{align*}
  \Delta_h g (x) & = \left[ p''(r) + \frac{2 (n-2j)+2\l_\k+1}{r} p'(r)  \right] Y_{n-2j}^h(x), \\
  \frac{d }{dr} g (x)& = \left[ p'(r) + \frac{n-2j}{r} p(r)  \right] Y_{n-2j}^h(x),\\
  \frac{d^2}{dr^2}g(x) & = \left[p''(r) + \frac{2(n-2j)}{r} p'(r)  
       + \frac{(n-2j)(n-2j-1)}{r^2} p(r) \right] Y_{n-2j}^h(x).
\end{align*}
\end{lem}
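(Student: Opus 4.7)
The plan is to reduce everything to the one-variable function $r \mapsto p(r) r^{n-2j}$ by exploiting the homogeneity of $Y_{n-2j}^h$. In polar coordinates $x = r\xi$ with $\xi \in \sph$ fixed, homogeneity of the $h$-harmonic gives $Y_{n-2j}^h(x) = r^{n-2j} Y_{n-2j}^h(\xi)$, so
$$
 g(r\xi) = p(r)\, r^{n-2j}\, Y_{n-2j}^h(\xi).
$$
Along the ray $\xi$ is constant, and I would reassemble $r^{n-2j} Y_{n-2j}^h(\xi) = Y_{n-2j}^h(x)$ at the end of each computation to recover the stated form.

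The radial derivative formulas then follow by applying the product rule once and twice to $p(r) r^{n-2j}$ and factoring out $Y_{n-2j}^h(x)$. For the $h$-Laplacian, I would substitute $g$ into the polar decomposition \eqref{eq:Delta-pola}. The radial part produces the three terms from $\partial^2/\partial r^2$ and the two terms from $(2\l_\k+1) r^{-1} \partial/\partial r$ acting on $p(r) r^{n-2j}$; the spherical part, by the eigenfunction relation \eqref{eq:h-harmonic-eigen}, contributes
$$
 \frac{1}{r^2} \Delta_{h,0}\bigl(p(r)\, r^{n-2j}\, Y_{n-2j}^h(\xi)\bigr) = -(n-2j)(n-2j+2\l_\k)\, r^{n-2j-2}\, p(r)\, Y_{n-2j}^h(\xi).
$$
Collecting coefficients, the $r^{n-2j-2} p(r)$ contribution is
$$
 (n-2j)(n-2j-1) + (2\l_\k+1)(n-2j) - (n-2j)(n-2j+2\l_\k) = 0,
$$
while the $r^{n-2j-1} p'(r)$ contribution telescopes to $2(n-2j) + 2\l_\k + 1$. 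Factoring $r^{n-2j} Y_{n-2j}^h(\xi) = Y_{n-2j}^h(x)$ then yields the claimed formula for $\Delta_h g$.

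The computation is otherwise routine; the only non-trivial point is the algebraic cancellation of the $p(r)/r^2$ terms. This cancellation simply encodes the fact that $Y_{n-2j}^h$ is itself $h$-harmonic, so the spherical part of $\Delta_h$ interacts with the product-rule terms coming from the homogeneity of $Y_{n-2j}^h$ to produce a genuine first-order radial operator in $p$. No essential obstacle arises.
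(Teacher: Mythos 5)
Your proof is correct and is essentially the paper's own argument: write $g(r\xi)=p(r)\,r^{n-2j}\,Y_{n-2j}^h(\xi)$ by homogeneity, get the two radial identities by the product rule, and get the $\Delta_h$ identity from the polar decomposition \eqref{eq:Delta-pola} together with the eigenvalue relation \eqref{eq:h-harmonic-eigen}. Your verification of the key cancellation, $(n-2j)(n-2j-1)+(2\lambda_\kappa+1)(n-2j)-(n-2j)(n-2j+2\lambda_\kappa)=0$, is exactly the ``straightforward computation'' the paper leaves to the reader, so nothing is missing.
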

 
\begin{proof}
Using the fact that $Y_{n-2j}^h(x) = r^{n-2j} Y_{n-2j}^h(\xi)$, the proof of the first item follows from 
\eqref{eq:Delta-pola} and \eqref{eq:h-harmonic-eigen}. The detail, and the proof of the other two identities,
amounts to a straightforward computation. 
\end{proof}

In the polar coordinates $x = r \xi$, it is easy to verify that $\la x, \nabla \ra = r \frac{d}{dr}$. Hence, 
using the identities in the lemma, we can give a direct proof of \eqref{eq:DE-ball} as follows:
setting $p(r) = C_{2j}^{(\mu, n-2j + \l_\k+\f12)}(r)$ and using \eqref{eq:DEg-gegen}, it is straightforward to 
verify that \eqref{eq:DE-ball} holds for $P_{\ell,j}^n(W_{\k,\mu,\nu};x)$, which establishes the identity for all
elements in $\CV_n^d(W_{\k,\mu,\nu})$ since the terms in \eqref{eq:DE-ball} are independent of $j$. 

For the case $\nu \ne 0$, we need to apply the lemma with $p_j(r) = C_{2j}^{(\mu, n-2j + \l_\k + \nu+\f12)}(r)$.
The same consideration, however, yields the following weaker result:

\begin{prop}
The polynomial $P(x) = P_{\ell,j}^n(W_{\k,\mu,\nu};x)$ in \eqref{eq:OPbasis} satisfies 
\begin{align} \label{eq:DE-bad}
 (\Delta_h & - \la x ,\nabla\ra^2 - (2 \l_\k+2\mu+2\nu+1) \la x,\nabla\ra) P
 \\
  &   +  \frac{2 \nu}{\|x\|^2} (\la x ,\nabla\ra  - (n-2j) ) P  = - n (n+2 \l_k + 2\mu +2\nu +1) P. \notag
\end{align}
\end{prop}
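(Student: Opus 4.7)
The plan is a direct substitution into the explicit basis \eqref{eq:OPbasis} followed by reduction to the generalized Gegenbauer ODE \eqref{eq:DEg-gegen}. Set $k := n-2j$ and $p(r) := C_{2j}^{(\mu,\, k+\l_\k+\nu+\f12)}(r)$, so that $P(x) = c(j)\, p(\|x\|)\, Y_{\ell,k}^h(x)$ for some constant $c(j)$.

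First I would apply Lemma~\ref{lem:diff} combined with the polar-coordinate identity $\la x,\nabla\ra = r\, d/dr$ (iterating once yields $\la x,\nabla\ra^2$). Factoring out the common $c(j)\, Y_{\ell,k}^h(x)$, each of $\Delta_h P$, $\la x,\nabla\ra P$, and $\la x,\nabla\ra^2 P$ reduces to a radial expression in $p$, $p'$, $p''$ over powers of $r = \|x\|$; explicitly one finds $\la x,\nabla\ra P \leftrightarrow rp'(r) + kp(r)$ and $\la x,\nabla\ra^2 P \leftrightarrow r^2 p''(r) + (2k+1)\, r\, p'(r) + k^2 p(r)$. The key observation is that $(\la x,\nabla\ra - k)\, P = c(j)\, r\, p'(r)\, Y_{\ell,k}^h(x)$: the subtraction of $k$ exactly cancels the $k\, p(r)$ term produced by $\la x,\nabla\ra P$, so the a priori singular factor $\|x\|^{-2}$ becomes just $r^{-1} p'(r)$.

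Setting $a := \mu$, $b := k+\l_\k+\nu+\f12$, and $A := 2\l_\k+2\mu+2\nu+1$ (so that $2a+2b+1 = 2k+2\l_\k+2\mu+2\nu+2$ and $2b = 2k+2\l_\k+2\nu+1$), collecting terms shows that the LHS of \eqref{eq:DE-bad} equals $c(j)\, Y_{\ell,k}^h(x)$ times
\[
(1-r^2)\, p''(r) - (2a+2b+1)\, r\, p'(r) + \frac{2b}{r}\, p'(r) - k(k+A)\, p(r).
\]
The role of the correction $2\nu/\|x\|^2$ is precisely to upgrade the coefficient $\frac{2k+2\l_\k+1}{r}$ natural to $\Delta_h$ into the $\frac{2b}{r}$ demanded by \eqref{eq:DEg-gegen}.

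Finally, I would invoke \eqref{eq:DEg-gegen} for $p = C_{2j}^{(a,b)}$ (degree $2j$, eigenvalue $2j(2j+2a+2b) = 2j(2n-2j+A)$, using $k = n-2j$) to replace the first three terms above by $-2j(2n-2j+A)\, p(r)$. The elementary identity $2j(2n-2j+A) + (n-2j)(n-2j+A) = n(n+A)$ then collapses the total eigenvalue to $-n(n+2\l_\k+2\mu+2\nu+1)$, as required. There is no real obstacle beyond careful bookkeeping; the conceptual content is that the correction $\|x\|^{-2}(\la x,\nabla\ra - k)$ is forced to depend on $j$ through $k = n-2j$, which is why no single $j$-independent differential-difference operator plays the role of $D_{\k,\mu}$ from \eqref{eq:DE-ball} once $\nu \neq 0$.
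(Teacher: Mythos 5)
Your argument is correct and coincides with the paper's own proof: the paper establishes \eqref{eq:DE-bad} by exactly this substitution, applying Lemma~\ref{lem:diff} with $p_j(r)=C_{2j}^{(\mu,\,n-2j+\lambda_\kappa+\nu+\frac12)}(r)$ together with $\langle x,\nabla\rangle = r\,d/dr$ and the simplified generalized Gegenbauer equation \eqref{eq:DEg-gegen}. Your write-up simply makes explicit the bookkeeping --- the radial reductions, the cancellation in $(\langle x,\nabla\rangle-(n-2j))P$ that tames the $\|x\|^{-2}$ factor, and the eigenvalue identity $2j(2n-2j+A)+(n-2j)(n-2j+A)=n(n+A)$ --- that the paper compresses into ``the same consideration.''
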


The last term in the left hand side in \eqref{eq:DE-bad}, which can be written as $p'(\|x\|) Y_{n-2j}^h(x)$ with
$p(r) = P_j^{(\mu-1/2, n-2j + \l_k+\nu)}(2r^2-1)$ by the second identity in Lemma \ref{lem:diff}, depends on 
the index $j$ in $P_{\ell,j}^n(W_{\k,\mu,\nu};x)$. This means that the \eqref{eq:DE-bad} works only for 
$P_{\ell,j}^n (W_{\k,\mu,\nu};x)$ but does not work for all elements in $\CV_n^d(W_{\k,\mu,\nu})$. This is 
unfortunate, since the fact that $\CV_n^d(W_{\k,\mu,0})$ satisfies the equation \eqref{eq:DE-ball} is essential
for defining an analogue of the heat kernel operator and for define a $K$-functional, both of which play an 
important role in analysis with respect to the weight function $W_{\k,\mu,0}$. 

We end this section with two relations between orthogonal polynomials that have different 
$\nu$ index. 

\begin{prop}
Let $\l_{\k,\nu,\mu} = \l_\k+ \nu + \mu + \f12$. Then the orthogonal polynomials in \eqref{eq:OPbasis} satisfy the 
relations
\begin{align*}
 & (n+\l_\k+ \nu + \mu + \tfrac12) P_{\ell,j}^n(W_{\k,\mu,\nu};x) \\
  & \qquad   =  (j+\mu - \tfrac12) P_{\ell,j-1}^{n-1}(W_{\k,\mu,\nu+1};x)
   +(n-j+\l_\k+ \nu + \mu + \tfrac12)  P_{\ell,j}^n(W_{\k,\mu,\nu+1};x),  
\end{align*}
and 
\begin{align*}
   & (n+\l_\k+ \nu + \mu + \tfrac32) \|x\|^2 P_{\ell,j}^n(W_{\k,\mu,\nu+1};x)  \\
   & \qquad = (j+ 1)  P_{\ell,j+1}^{n+2}(W_{\k,\mu,\nu};x) + (2n-2j+ \l_\k + \nu +1) P_{\ell,j}^n(W_{\k,\mu,\nu};x).
\end{align*}
\end{prop}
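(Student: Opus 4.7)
The plan is to reduce each identity to a contiguous relation among one-variable Jacobi polynomials by inserting the explicit basis formula \eqref{eq:OPbasis}. Writing each basis element as a Jacobi polynomial of degree $j$ in $2\|x\|^2-1$ times $Y^h_{\ell,n-2j}$, a unit shift in $\nu$ corresponds to a unit shift in the second Jacobi parameter. For the bookkeeping that follows, set $\a=\mu-\f12$ and $\b=n-2j+\nu+\l_\k$. After lining up the indices $(n,j)$ on both sides so that $n-2j$ is preserved throughout and factoring out the common spherical harmonic, each identity reduces to a purely one-variable statement about Jacobi polynomials whose parameters differ by unit shifts in the second slot.

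For the first relation, the underlying Jacobi identity is the standard contiguous formula
\[
 (2j+\a+\b+1)\, P_j^{(\a,\b)}(t) \;=\; (j+\a+\b+1)\, P_j^{(\a,\b+1)}(t) + (j+\a)\, P_{j-1}^{(\a,\b+1)}(t).
\]
With $\a=\mu-\f12$ and $\b=n-2j+\nu+\l_\k$, the three scalar coefficients become exactly $n+\l_\k+\nu+\mu+\f12$, $n-j+\l_\k+\nu+\mu+\f12$ and $j+\mu-\f12$ as in the proposition, and the two Jacobi factors on the right, multiplied by the common spherical harmonic, re-assemble into the required $P_{\ell,\cdot}^{\cdot}(W_{\k,\mu,\nu+1})$'s.

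For the second relation, the relevant input is the ``multiplication by $(1+t)/2$'' contiguous identity
\[
 (2j+\a+\b+2)\, \tfrac{1+t}{2}\, P_j^{(\a,\b+1)}(t) \;=\; (j+\b+1)\, P_j^{(\a,\b)}(t) + (j+1)\, P_{j+1}^{(\a,\b)}(t).
\]
Since $t=2\|x\|^2-1$ gives $(1+t)/2=\|x\|^2$, the prefactor $\|x\|^2$ on the left of the proposition appears naturally. Substituting the same $\a,\b$ produces the scalar coefficients stated, and the two Jacobi factors on the right, together with the surviving spherical harmonic, assemble into $P_{\ell,j+1}^{n+2}(W_{\k,\mu,\nu};\cdot)$ and $P_{\ell,j}^n(W_{\k,\mu,\nu};\cdot)$.

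Both Jacobi identities above are classical; the quickest derivation expands the left-hand side in the basis $\{P_k^{(\a,\b+1)}\}_k$ (resp.\ $\{P_k^{(\a,\b)}\}_k$) and uses orthogonality with respect to $(1-t)^\a(1+t)^{\b+1}$ (resp.\ $(1-t)^\a(1+t)^\b$) to see that only two coefficients survive; these two are then pinned down by matching the leading coefficient in $t$ and the value at $t=1$ via $P_n^{(\a,\b)}(1)=\binom{n+\a}{n}$. The main obstacle is essentially clerical: one has to keep track of how $\b=n-2j+\nu+\l_\k$ transforms under the various shifts on $(n,j)$ and ensure that each re-assembled piece matches the correct weight $W_{\k,\mu,\nu}$ versus $W_{\k,\mu,\nu+1}$.
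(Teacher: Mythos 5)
Your strategy is exactly the paper's: the paper's entire proof consists of substituting \eqref{eq:OPbasis} and invoking the two contiguous relations for Jacobi polynomials, cited there as \cite[(22.7.16), (22.7.19)]{AS}, and both one-variable identities you wrote down are correct (your sketch of how to derive them is also fine). The problem is in the step you dismiss as ``essentially clerical'': you assert that after setting $\a=\mu-\f12$, $\b=n-2j+\nu+\l_\k$ the output matches the proposition verbatim, and it does not — in fact your own identities prove a slightly different (and correct) statement, revealing two typos in the printed proposition that you should have caught. In the first relation, the term $(j+\a)P_{j-1}^{(\a,\b+1)}(2\|x\|^2-1)\,Y_{\ell,n-2j}^h(x)$ reassembles into $P_{\ell,j-1}^{n-2}(W_{\k,\mu,\nu+1};x)$, \emph{not} $P_{\ell,j-1}^{n-1}(W_{\k,\mu,\nu+1};x)$: keeping the Jacobi parameter equal to $\b+1$ and the harmonic degree equal to $n-2j$ after the shifts $\nu\mapsto\nu+1$, $j\mapsto j-1$ forces the total degree to drop by $2$. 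Indeed, $P_{\ell,j-1}^{n-1}(W_{\k,\mu,\nu+1};x)$ carries the harmonic factor $Y_{\ell,(n-1)-2(j-1)}^h=Y_{\ell,n-2j+1}^h$, of the wrong parity, so the relation as printed cannot hold; your phrase ``so that $n-2j$ is preserved throughout'' silently enforces the corrected indexing while your conclusion claims agreement with the stated one.

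The second relation has the same issue with a coefficient. Your identity yields $j+\b+1=n-j+\l_\k+\nu+1$ as the coefficient of $P_{\ell,j}^n(W_{\k,\mu,\nu};x)$, whereas the proposition prints $2n-2j+\l_\k+\nu+1$; these differ by $n-j$. A one-line check confirms your identity and refutes the printed coefficient: take $d=2$, $\k=0$ (so $\l_\k=0$), $\mu=\f12$, $\nu=0$, $n=1$, $j=0$; then the claimed relation reads $3\cdot\f{1+t}{2}\,P_0^{(0,2)}(t)=P_1^{(0,1)}(t)+c$ with $t=2\|x\|^2-1$, and since $P_1^{(0,1)}(t)=\f12(3t-1)$ this forces $c=2=n-j+\l_\k+\nu+1$, not $3=2n-2j+\l_\k+\nu+1$. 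So your two Jacobi identities do prove the (corrected) proposition by the same route as the paper, but as written your proof asserts coefficient and index matches that are false for the statement given; a careful write-up must either carry out the reassembly explicitly — obtaining $P_{\ell,j-1}^{n-2}$ and $n-j+\l_\k+\nu+1$ — or flag the discrepancies as typographical errors in the statement.
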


\begin{proof}
Using \eqref{eq:OPbasis}, these two identities follow from the corresponding identities for the Jacobi polynomials
given in \cite[(22.7.16)]{AS} and \cite[(22.7.19)]{AS}.
\end{proof}

\section{Orthogonal expansions on the unit ball}
\setcounter{equation}{0}

With respect to the mutually orthogonal basis $\{P_{j,\ell}^n\}$ in the Proposition \ref{prop:basis}, the Fourier
coefficient $\wh f_{j,\ell}^n$ of $f \in L^2(W_{\k,\mu,\nu},\BB^d)$ is defined by 
$\wh f_j^n:= \la f, P_{j,\ell}^n \ra_{\k,\mu,\nu}$ and the Fourier orthogonal expansion of $f$ is defined by 
$$
  f = \sum_{n=0}^\infty \proj_n^{\k,\mu,\nu} f \quad \hbox{with}\quad \proj_n^{\k,\mu,\nu} f(x) := \sum_{0\le j \le n/2}
        \sum_{\ell=1}^{\s_{n-2j}^d}    H_{j,n}^{-1} \wh f_{j,\ell}^n P_{j,\ell}^n(x). 
$$
The projection operator $\proj_n^{\k,\mu,\nu}: L^2(W_{\k,\mu,\nu},\BB^d) \mapsto \CV_n^d(W_{\k,\mu,\nu})$ can 
be written as
$$
  \proj_n^{\k,\mu,\nu} f(x) = b_{\k,\mu,\nu}\int_{\BB^d} f(y) P_n(W_{\k,\mu,\nu}; x,y) W_{\k,\mu,\nu}(y) dy,
$$
where $P_n(W_{\k,\mu,\nu};\cdot,\cdot)$ is the reproducing kernel of $\CV_n^d(W_{\k,\mu,\nu})$ and 
\begin{equation}\label{eq:P-kernel}
  P_n(W_{\k,\mu,\nu}; x,y) :=\sum_{0\le j \le n/2}  \sum_{\ell=1}^{\s_{n-2j}^d} H_{j,n}^{-1} P_{j,\ell}^n(x) P_{j,\ell}^n(y).
\end{equation}
It is known that the reproducing kernel is independent of the choice of orthonormal bases. For the study of 
Fourier orthogonal series, it is essential to obtain a concise formula for the reproducing kernel.  

First we need a concise formula for the reproducing kernel of the $h$-spherical harmonics, for which we 
need the intertwining operator $V_\k$ between the partial derivatives and the Dunkl operators, which
is a linear operator uniquely determined by 
$$
  V_\kappa 1 =1, \qquad V_\kappa \CP_n^d = \CP_n^d, \qquad 
   \CD_i V_\kappa = V_\kappa \partial_i, \quad 1 \le i \le d,
$$
where $\CP_n^d$ is the space of homogeneous polynomials of degree $n$ in $d$ variables. 
The operator $V_\k$ is known to be nonnegative, but the explicit formula of $V_\kappa$ is unknown in general. 
In the case $\ZZ_2^d$, $V_\k$ is an integral operator given by
\begin{equation} \label{eq:Vk}
  V_\k f(x) = c_\k \int_{[-1,1]^d} f(x_1 t_, \ldots, x_d t _d) \prod_{i=1}^d (1+t_i)(1-t_i)^{\k_i-1} dt,
\end{equation}
where $c_\k = \prod_{i=1}^d c_{\k_i}$ and $c_a = \Gamma(a+1/2)/(\sqrt{\pi} \Gamma(a))$ and, if some $\k_i =0$,
the formula holds under the limit 
\begin{equation} \label{eq:limit-0}
  \lim_{a \to 0+} c_a \int_{-1}^1 f(t) (1-t^2)^{a-1} dt = \frac{1}{2} \left [ f(1) + f(-1)\right].
\end{equation}
Let $\{Y_{\ell,n}^h:1 \le \ell \le \s_n^d\}$ be an orthonormal basis of $\CH_n^d(h_\k^2)$. Then the reproducing
kernel of $\CH_n^d(h_\k^2)$ is given by the addition formula of $h$-spherical harmonics, 
\begin{equation}\label{eq:kernel-h}
  \sum_{\ell =1}^{\s_n^d} Y_{\ell,n}^h(x) Y_{\ell,n}^h(y) = V_\k \left [Z_n^{\g_\k+ \frac{d-2}{2}} (\la \cdot ,y\ra ) \right](x),
\end{equation}
where $Z_n^\l$ is a multiple of the Gegenbauer polynomial 
$$
   Z_n^\l(t) := \frac{n+\l}{\l} C_n^\l(t), \qquad -1 \le t \le 1.
$$
For convenience, we define, for given $\k, \nu,  \mu$, 
$$
   \l_{\k,\mu,\nu} := \nu + \mu + \g_\k +\tfrac{d-1}{2}.
$$

\begin{thm} \label{thm:repodBall}
Let $\nu > 0$. If $\mu > 0$, 
\begin{align}\label{eq:reprodBall}
P_n(W_{\k,\mu,\nu}; x,y)   = a_{\k,\mu,\nu}  & \int_{-1}^1 \int_0^1 \int_{-1}^1 
 V_\k \left[ Z_n^{\l_{\k,\mu,\nu}}  (\zeta(\cdot; \|x\|,y, u,v, t)) \right](x') \\
  &   \times  (1-t^2)^{\mu-1} dt u^{\nu-1} (1-u)^{\g_k+\f{d-2}{2}} du (1-v^2)^{\nu-\f12} dv,\notag
\end{align}
where $a_{\k,\mu,\nu}$ is a constant such that the integral is 1 if $n =0$ and 
$$
 \zeta(\cdot; r,y, u,v,t): =r \, \|y\| u v + r \la \cdot, y \ra (1-u) + \sqrt{1-r^2} \sqrt{1-\|y\|^2}\, t ;
$$
furthermore, if $\mu =0$, then the formula holds under the limit \eqref{eq:limit-0}. 
\end{thm}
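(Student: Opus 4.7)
The plan is to reduce the theorem to a one-dimensional summation identity from \cite{X14} and then lift it to the ball with reflection-invariant weight by means of the intertwining operator $V_\k$ together with the addition formula \eqref{eq:kernel-h} for $h$-spherical harmonics.

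First I would insert the explicit basis of Proposition \ref{prop:basis} into the definition \eqref{eq:P-kernel} of the reproducing kernel. Writing $x=rx'$ and $y=sy'$ with $x',y'\in\sph$ and using the homogeneity of $h$-harmonics of degree $n-2j$, the $\ell$-sum collapses by \eqref{eq:kernel-h} to $(rs)^{n-2j}V_\k\bigl[Z_{n-2j}^{\g_\k+(d-2)/2}(\la\cdot,y'\ra)\bigr](x')$. Since $V_\k$ acts only on the directional variable $x'$ and commutes with scalar coefficients depending on $r$ and $s$, it can be pulled outside the $j$-sum. The problem thereby reduces to a purely one-dimensional identity in the scalar variable $\xi=\la x',y'\ra$: the sum over $j$ of products of Jacobi polynomials evaluated at $2r^2-1$ and $2s^2-1$, multiplied by $(rs)^{n-2j}Z_{n-2j}^{\g_\k+(d-2)/2}(\xi)$ and normalized by $H_{j,n}^{-1}$, must equal the triple integral of $Z_n^{\l_{\k,\mu,\nu}}(\zeta)$ against the stated weights.

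The hard part will be this one-dimensional identity, which is essentially what was established in \cite{X14} in the scalar case $\k=0$ (the reproducing kernel for $W_{\mu,\nu}$). In the final formula the three integrations play distinct roles: the $t$-integration with weight $(1-t^2)^{\mu-1}$ is inherited from the classical kernel \eqref{eq:kernel-Wmu}; the $u$-integration with beta weight $u^{\nu-1}(1-u)^{\g_\k+(d-2)/2}$ interpolates convexly, via $rs[uv+\xi(1-u)]$, between the classical Gegenbauer argument $rs\xi$ and a $\nu$-dependent contribution $rsv$ produced by the singular factor $\|x\|^{2\nu}$; and the $v$-integration with Gegenbauer weight $(1-v^2)^{\nu-1/2}$ supplies the extra generalized-Gegenbauer structure of order $\nu$. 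The parameters are arranged so that the Gegenbauer index lifts from $n-2j$ in the summand to $n$ in the integrand, the new order being $\l_{\k,\mu,\nu}=\l_\k+\mu+\nu+\tfrac12$. Once this scalar identity is in hand, applying $V_\k$ in the $x'$ variable yields \eqref{eq:reprodBall}; the constant $a_{\k,\mu,\nu}$ is pinned down by evaluating at $n=0$, and the case $\mu=0$ is handled by the limit prescription \eqref{eq:limit-0} applied in the $t$ variable.
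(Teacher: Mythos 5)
Your proposal follows essentially the same route as the paper's proof: insert the basis of Proposition \ref{prop:basis} into \eqref{eq:P-kernel}, collapse the $\ell$-sum by the addition formula \eqref{eq:kernel-h}, pull $V_\k$ outside, and reduce to the scalar summation identity from \cite{X14} --- which the paper carries out by first quoting the index-raising integral identity (the $u,v$-integrals lifting $Z_{n-2j}^{\g_\k+\frac{d-2}{2}}$ to index $\nu+\g_\k+\frac{d-2}{2}$) and then performing the $t$-integral summation ``as in the proof of Theorem 3.4 in \cite{X14}'', with $\g_\k$ entering only as a parameter. Your account of the roles of the three integrations and of the parameter bookkeeping ($\l_{\k,\mu,\nu}=\l_\k+\mu+\nu+\tfrac12$, the $\mu=0$ limit via \eqref{eq:limit-0}) matches the paper's argument step for step.
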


\begin{proof}
By \eqref{eq:OPbasis}, \eqref{eq:P-kernel} and the addition formula \eqref{eq:kernel-h},
\begin{align*}
  P_n(W_{\k,\mu,\nu}; x,y) = & \sum_{j=0}^{\lfloor \frac{n}{2}\rfloor}  H_{j,n}^{-1} 
     P_{j}^{(\mu-\frac12, \b_{j,n} )}(2 \|x\|^2-1) P_{j}^{(\mu-\frac12, \b_{j,n} )}(2 \|x\|^2-1) \\
      & \times \|x\|^{n-2j} \|y\|^{n-2j} V_\k \left[Z_{n-2j}^{\g_\k+ \frac{d-2}{2}}(\la \cdot,y'\ra)\right](x'), 
\end{align*}
where $\b_{j,n}: = n-2j+\l_{\k,\nu}-\f12$ and $x = \|x\| x'$. The sum in the right hand side is close to
the addition formula for an integral of the Gegenbauer polynomial, except that the index of $Z_{n-2j}^{\l_\k}$ 
does not match. This is where the new integration relation on the Gegenbauer polynomials comes in, which
states, as shown recently in \cite{X14}, that 
\begin{equation*} 
 Z_n^\l(x) = c_\mu \s_{\l+1,\mu} \int_{-1}^1 \int_0^1
    Z_n^{\l+\nu} (u v +(1-u) x) u^{\nu-1} (1-u)^{\l} du\,
   (1-v^2)^{\nu-1/2}dv,
\end{equation*}
where $\l > -1/2$, $\nu > 0$, $\s_{\l,\mu} := \frac{\Gamma( \l+\mu)}{\Gamma(\l) \Gamma(\mu)} $ and $c_\mu:= 
\frac{\Gamma( \mu+1)}{\Gamma(\f12) \Gamma(\mu+\f12)}.$ Using this relation, we can write
\begin{align*}
  P_n(W_{\k,\mu,\nu}; & x,y) =   c_\mu \s_{\l+1,\mu} \int_{-1}^1 \int_0^1  V_\k \Bigg[
   \sum_{j=0}^{\lfloor \frac{n}{2}\rfloor}  H_{j,n}^{-1} 
     P_{j}^{(\mu-\frac12, \b_{j,n}  )}(2 \|x\|^2-1)   \\
          &   \times  P_{j}^{(\mu-\frac12,  \b_{j,n}  )}(2 \|x\|^2-1)
  \|x\|^{n-2j} \|y\|^{n-2j}Z_{n-2j}^{\nu + \g_\k+\frac{d-2}{2}}(s \la \cdot,y'\ra+(1-)y)\Bigg](x') \\
    & \times s^{\nu} (1-s)^{\mu-1} ds\, (1-y^2)^{\nu-1/2}dy.
\end{align*}
This gives the stated result since the sum inside the bracket can be summed up as an integral of the Gegenbauer 
polynomial $Z_n^{\l_{\k,\mu,\nu}}$. This last step is involved but the detail is similar to the proof of Theorem 3.4
in \cite{X14}, where the case $\k = 0$ is established. 
\end{proof}

In the case of $\nu =0$, a concise formula of the reproducing kernel $P_n(W_{\k,\mu,0})$ was established in \cite{X01},
which can be obtained as the limiting case of \eqref{eq:reprodBall} under the limit process of \eqref{eq:limit-0}. The
formula in \cite{X01} was deduced from the concise formula for the reproducing kernels of the $h$-spherical harmonics 
associated with $h_\k^2(x) |x_{d+1}|^{2\mu}$ on the sphere $\SS^d$, which are intimately connected to orthogonal
polynomials with respect to $W_{\k,\mu}$ on $\BB^d$. For $\nu \ne 0$, however, this connection no longer holds. 

In the case of $G = \ZZ_2^d$, the intertwining operator $V_\k$ is given explicitly by \eqref{eq:Vk}. We state this
case as a corollary. 

\begin{cor} \label{cor:kernelZ2d}
Let $W_{\k,\mu,\nu}$ be given in terms of $h_\k$ defined in \eqref{eq:h-Z2d} and let $\nu > 0$. For
$\k_i \ge 0$ and $\nu \ge 0$, 
\begin{align*}
P_n(W_{\k,\mu,\nu}; x,y)  & =  a_{\k,\mu,\nu}  \int_{-1}^1 \int_0^1 \int_{-1}^1 
 \int_{[-1,1]^d } Z_n^{\l_{\k,\mu,\nu}}  (\zeta(x,y, u,v, s, t))  \prod_{i=1}^d (1+s_i)\\
  &   \times \prod_{i=1}^d (1-s_i^2)^{\k_i-1} ds (1-t^2)^{\mu-1} dt u^{\nu-1} (1-u)^{\g_k+\f{d-2}{2}} du (1-v^2)^{\nu-\f12} dv,\notag
\end{align*}
which holds under the limit \eqref{eq:limit-0} when $\mu$ or any $\k_i$ is 0, where
$$
 \zeta( x,y, u,v, s, t): = \|x\| \, \|y\| u v +   (1-u) \sum_{i=1}^d x_i y_i s_i + \sqrt{1-\|x\|^2} \sqrt{1-\|y\|^2}\, t.
$$
\end{cor}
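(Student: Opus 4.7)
The approach is to apply \thmref{thm:repodBall} and then substitute the explicit integral representation \eqref{eq:Vk} of the intertwining operator $V_\k$ for $G = \ZZ_2^d$. No new analytic idea is required; the proof reduces to a bookkeeping computation that tracks how the argument of $Z_n^{\l_{\k,\mu,\nu}}$ transforms when $V_\k$ is evaluated via its explicit integral form.

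First, I would write $x = \|x\| x'$ with $x' \in \sph$ and observe that inside the integrand of \eqref{eq:reprodBall} the operator $V_\k$ is applied, in the variable $x'$, to the function
$$
  \xi \longmapsto Z_n^{\l_{\k,\mu,\nu}} \bigl(\|x\|\|y\| u v + \|x\|\la \xi, y\ra (1-u) + \sqrt{1-\|x\|^2}\sqrt{1-\|y\|^2}\, t\bigr),
$$
which depends on $\xi$ only through the inner product $\la \xi, y\ra$. Next, I would apply \eqref{eq:Vk}, which replaces evaluation at $x'$ by the $[-1,1]^d$-integral of the function at $(x_1' s_1,\ldots,x_d' s_d)$ weighted by $\prod_i (1+s_i)(1-s_i^2)^{\k_i-1}$. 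Under this substitution $\la \xi, y\ra$ becomes $\sum_i x_i' s_i y_i$, and multiplying by the factor $\|x\|$ already present in $\zeta$ produces $(1-u)\sum_i x_i s_i y_i$ since $x_i = \|x\| x_i'$. The remaining two terms $\|x\|\|y\| u v$ and $\sqrt{1-\|x\|^2}\sqrt{1-\|y\|^2}\, t$ are constants in $\xi$ and pass through untouched. This yields exactly the function $\zeta(x,y,u,v,s,t)$ stated in the corollary; folding the $s$-integral from $V_\k$ in with the $t$, $u$, $v$ integrals from \thmref{thm:repodBall} gives the five-fold integral, with the normalization constants $c_\k$ from \eqref{eq:Vk} and $a_{\k,\mu,\nu}$ from \thmref{thm:repodBall} absorbed into a single constant still denoted $a_{\k,\mu,\nu}$.

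The boundary cases $\mu = 0$ or some $\k_i = 0$ are handled in the standard way: the corresponding one-dimensional integral is reinterpreted as the evaluation functional \eqref{eq:limit-0}, and since the integrand depends continuously on all parameters this limit is valid term by term inside the iterated integral. I do not foresee any serious obstacle; the only care required lies in bookkeeping \eqref{eq:Vk} and in correctly combining the scalar factor $\|x\|$ with the change of variables in the inner product.
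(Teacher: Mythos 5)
Your proof is correct and coincides with the paper's own (implicit) argument: the corollary is stated as an immediate consequence of Theorem~\ref{thm:repodBall} once the explicit integral representation \eqref{eq:Vk} of $V_\k$ for $G=\ZZ_2^d$ is substituted in the variable $x'$, which is exactly the bookkeeping computation you carry out. Note only that \eqref{eq:Vk} as printed contains a typo, $(1-t_i)^{\k_i-1}$ in place of the standard $(1-t_i^2)^{\k_i-1}$; you used the correct weight, consistent with the factors $\prod_i(1+s_i)(1-s_i^2)^{\k_i-1}$ appearing in the corollary.
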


According to these concise formulas, $P_n(W_{\k,\mu,\nu})$ is an integral transform of the Gegenbauer polynomials,
which means that the Fourier orthogonal expansions with respect to $W_{\k,\mu,\nu}$ is connected to the orthogonal
expansions in the Gegenbauer polynomials.  
Let $w_\l(x): = (1-x^2)^{\l-\f12}$ for $\l > -1/2$ and $x \in (-1,1)$ and $c_\l$ be the normalization
constant of $w_\l$. The Gegenbauer polynomials
$C_n^\l$ are orthogonal with respect to $w_\l$.  
For $g \in L^1(w_{\l_{\k,\mu,\nu}}; [-1,1])$ and $x, y \in \BB^d$,  define 
\begin{align}\label{eq:Gg-bound}
L_x^{\k,\mu,\nu} g(y) : = a_{\k,\mu,\nu}  & \int_{-1}^1 \int_0^1 \int_{-1}^1 
 V_\k \left[ g (\zeta(\cdot; x,y, u,v, t)) \right](x') \\
  &   \times  (1-t^2)^{\mu-1} dt u^{\nu-1} (1-u)^{\g_k+\f{d-2}{2}} du (1-v^2)^{\nu-\f12} dv.\notag
\end{align}
For $f \in L^1(W_{\k,\mu,\nu},\BB^d)$ and $g \in L^1(w_{\l_{\k,\mu,\nu}}; [-1,1])$, define
$$
  (f *_{\k,\mu,\nu} g)(x) := b_{\k,\mu,\nu} \int_{\BB^d} f(y) L_x^{\k,\mu,\nu} g(y) W_{\k,\mu,\nu}(y) dy.
$$
This defines a convolution structure with respect to $W_{\k,\mu,\nu}$ on $\BB^d$. To develop its 
property, we start with a lemma. 

\begin{lem}
Let $\nu \ge 0$ and $\mu \ge 0$, and write $\l = \l_{\k,\mu,\nu}$. Then 
for $g \in  L^1(w_\l; [-1,1])$ and $P_n \in \CV_n^d(W_{\k,\mu,\nu})$,  
\begin{equation} \label{eq:IntGx}
  b_{\k,\mu,\nu} \int_{\BB^d} L_x^{\k,\mu,\nu} g(y) P_n(y) W_{\k,\mu,\nu}(y) dy = 
     c_\l \int_{-1}^1 \frac{C_n^\l(t)}{C_n^\l(1)} g(t)w_{\l}(t) dt   P_n (x).
\end{equation}
\end{lem}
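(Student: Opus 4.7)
My plan is to exploit the construction of $L_x^{\k,\mu,\nu}$: comparing the integrand in the definition \eqref{eq:Gg-bound} with the formula for the reproducing kernel in Theorem~\ref{thm:repodBall}, one reads off
$$
L_x^{\k,\mu,\nu} Z_m^\l(y) = P_m(W_{\k,\mu,\nu}; x, y), \qquad \l = \l_{\k,\mu,\nu},
$$
for every integer $m \ge 0$. This is the central identification from which the lemma follows.

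First, I would verify \eqref{eq:IntGx} in the case $g = Z_m^\l$. The left-hand side equals $b_{\k,\mu,\nu}\int_{\BB^d} P_m(W_{\k,\mu,\nu};x,y) P_n(y) W_{\k,\mu,\nu}(y)\, dy$, which is $\delta_{mn} P_n(x)$ by the reproducing property of the kernel of $\CV_m^d(W_{\k,\mu,\nu})$. On the right-hand side, using $Z_m^\l = \tfrac{m+\l}{\l} C_m^\l$ yields
$$
\frac{m+\l}{\l\, C_n^\l(1)}\Bigl[c_\l \int_{-1}^1 C_n^\l(t) C_m^\l(t) w_\l(t)\, dt\Bigr] P_n(x),
$$
which reduces to $\delta_{mn} P_n(x)$ via the orthogonality of $\{C_k^\l\}$ and the classical normalization $c_\l\int_{-1}^1 [C_n^\l(t)]^2 w_\l(t)\, dt = \frac{\l C_n^\l(1)}{n+\l}$, itself an immediate consequence of the standard Gegenbauer $L^2$-norm formula and the duplication identity for $\Gamma$. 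Since both sides of \eqref{eq:IntGx} are linear in $g$, the identity then holds for every polynomial $g$ after writing it as a finite linear combination of $Z_m^\l$'s.

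Next, I would extend by density to arbitrary $g \in L^1(w_\l;[-1,1])$. The right-hand side is clearly a bounded linear functional on $L^1(w_\l)$ because $|C_n^\l(t)| \le C_n^\l(1)$ on $[-1,1]$. For the left-hand side, nonnegativity of the intertwining operator $V_\k$ and of every weight factor in \eqref{eq:Gg-bound} gives $|L_x^{\k,\mu,\nu} g(y)| \le L_x^{\k,\mu,\nu}|g|(y)$; plugging $g \equiv 1$ shows that $L_x^{\k,\mu,\nu}$ is bounded from $L^1(w_\l;[-1,1])$ into $L^1(W_{\k,\mu,\nu};\BB^d)$ (the normalization $a_{\k,\mu,\nu}$ handles the constant case). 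Density of polynomials in $L^1(w_\l;[-1,1])$ then promotes the identity from polynomial $g$ to general $g \in L^1(w_\l)$.

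The principal obstacle is the opening identification: one has to align every variable and weight factor of \eqref{eq:Gg-bound} with its counterpart in Theorem~\ref{thm:repodBall}, in particular interpreting $\zeta(\cdot;x,y,u,v,t)$ in \eqref{eq:Gg-bound} as $\zeta(\cdot;\|x\|,y,u,v,t)$. Once that linkage is secured and the constant $a_{\k,\mu,\nu}$ is matched, the proof collapses to a one-dimensional Gegenbauer orthogonality computation combined with the reproducing property of $P_m(W_{\k,\mu,\nu};\cdot,\cdot)$ on the ball.
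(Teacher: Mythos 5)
Your proof is correct and takes essentially the same route as the paper's: both rest on the identification $L_x^{\k,\mu,\nu} Z_m^{\l}(y) = P_m(W_{\k,\mu,\nu};x,y)$ read off from the definition \eqref{eq:Gg-bound} and Theorem \ref{thm:repodBall}, then verify \eqref{eq:IntGx} for polynomial $g$ via the Gegenbauer $L^2$-norm formula together with the reproducing property of the kernel, and finally pass to general $g \in L^1(w_\l;[-1,1])$ by density of polynomials. The only difference is cosmetic --- you check the identity on the basis elements $Z_m^\l$ and invoke linearity, where the paper expands $g$ in that basis directly, and your density step (boundedness of both sides as functionals on $L^1(w_\l)$, using the nonnegativity of $V_\k$) is spelled out in more detail than the paper's one-line appeal.
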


\begin{proof}
It follows directly from the definition that 
\begin{equation} \label{eq:kernel-kernel}
   P_n(W_{\k,\mu,\nu};x,y) := L_x^{\k,\mu,\nu} Z_n^{\k,\mu,\nu} (y). 
\end{equation}
If $g$ is a polynomial of degree at most $m$, then $g$ can be written as 
\begin{equation}\label{eq:g-Fourier}
  g(t) = \sum_{k=0}^m \wh g_n^\l Z_k^{\l}  (t), \quad \hbox{with} \quad 
      \wh g_n^\l := c_\mu \int_{-1}^1  \frac{C_k^{\l }(t)}{C_k^{\l}(1)}g(t)w_{\l}(t) dt,
\end{equation}
where we have used the fact that the $L^2$ norm of $C_n^\l$ is equal to $C_n^\l(1) \l /(n+\l)$, which
implies that 
$$
     L_x^{\k,\mu,\nu} g(y) = \sum_{k=0}^n \wh g_n^\l P_k(W_{\k,\mu,\nu};x,y), \qquad x, y \in \BB^d. 
$$
Consequently, if $m \ge n$, then by the definition of the reproducing kernel, 
$$
  b_{\k,\mu,\nu} \int_{\BB^d} L_x^{\k,\mu,\nu} g(y) P_n(y)W_{\k,\mu,\nu}(y) dy =  \wh g_n^\l P_n(y),
$$
which proves \eqref{eq:IntGx} for $g$ being a polynomial of degree $m \ge n$ and, hence, for 
$g \in L^1(w_{\l}; [-1,1])$ by the density of polynomials. 
\end{proof}

\begin{prop} \label{prop:Young}
Let $\nu \ge 0$ and $\mu \ge 0$. Let $p,q,r \ge 1$ and $p^{-1} = r^{-1}+q^{-1}-1$. For $f \in L^q(W_{\k,\mu,\nu},\BB^d)$
and $g \in L^r(w_{\lambda_{\k,\mu,\nu}}; [-1,1])$,
\begin{equation} \label{eq:h-Young}
                \|f *_{\k,\mu,\nu} g\|_{W_{\k,\mu,\nu},p} \le \|f\|_{W_{\k,\mu,\nu},q} \|g\|_{w_{\l_{\k,\mu,\nu}},r}.
\end{equation}
\end{prop}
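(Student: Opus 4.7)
My plan is to establish \eqref{eq:h-Young} by mimicking the classical proof of Young's convolution inequality, reducing matters to three structural properties of the operator $L_x^{\k,\mu,\nu}$: pointwise positivity, the symmetry $L_x^{\k,\mu,\nu} g(y) = L_y^{\k,\mu,\nu} g(x)$, and a uniform $L^r$-bound in the $y$ variable.

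Positivity is immediate from \eqref{eq:Gg-bound}, since $V_\k$ is a positive operator and the outer integrands are nonnegative when $\nu \ge 0$ and $\mu \ge 0$ (with the usual $\mu = 0$ interpretation via \eqref{eq:limit-0}); combined with $L_x^{\k,\mu,\nu} 1 \equiv 1$, which by \eqref{eq:kernel-kernel} equals the reproducing kernel $P_0(W_{\k,\mu,\nu};x,y) = 1$, it follows that $L_x^{\k,\mu,\nu}$ is, for each fixed pair $(x,y)$, integration of $g$ against a probability measure on $[-1,1]$. Jensen's inequality then yields the pointwise bound $|L_x^{\k,\mu,\nu} g(y)|^r \le L_x^{\k,\mu,\nu}(|g|^r)(y)$, and applying \eqref{eq:IntGx} with $P_n \equiv 1$ to $|g|^r$ produces the uniform estimate
$$
 b_{\k,\mu,\nu}\int_{\BB^d} |L_x^{\k,\mu,\nu} g(y)|^r W_{\k,\mu,\nu}(y)\,dy \le \|g\|_{w_{\l_{\k,\mu,\nu}},r}^r.
$$
For the symmetry, \eqref{eq:kernel-kernel} together with the symmetry of the reproducing kernel in its two arguments gives $L_x^{\k,\mu,\nu} Z_n^{\l_{\k,\mu,\nu}}(y) = L_y^{\k,\mu,\nu} Z_n^{\l_{\k,\mu,\nu}}(x)$ for every $n$; linearity extends this to all polynomials, and density of polynomials in $L^1(w_{\l_{\k,\mu,\nu}};[-1,1])$ (combined with the inclusion $L^r \subset L^1$ on the compact interval) lifts it to any $g$ in the relevant $L^r$-space, at least $b_{\k,\mu,\nu}W_{\k,\mu,\nu}(x)\,dx \otimes b_{\k,\mu,\nu}W_{\k,\mu,\nu}(y)\,dy$-a.e.

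With these three ingredients, the assumption $1/p = 1/q + 1/r - 1$ is equivalent to $1/p + 1/q' + 1/r' = 1$, so a three-exponent H\"older inequality applied to the factorization
$$
 |f(y)\, L_x^{\k,\mu,\nu} g(y)| = \bigl(|L_x^{\k,\mu,\nu} g(y)|^r |f(y)|^q\bigr)^{1/p}\cdot |L_x^{\k,\mu,\nu} g(y)|^{1-r/p}\cdot |f(y)|^{1-q/p}
$$
(using the identities $(1-r/p)q' = r$ and $(1-q/p)r' = q$) yields the pointwise bound
$$
 |(f *_{\k,\mu,\nu} g)(x)|^p \le \|g\|_{w_{\l_{\k,\mu,\nu}},r}^{rp/q'}\|f\|_{W_{\k,\mu,\nu},q}^{qp/r'}\cdot b_{\k,\mu,\nu}\int_{\BB^d} |L_x^{\k,\mu,\nu} g(y)|^r |f(y)|^q W_{\k,\mu,\nu}(y)\,dy.
$$
Integrating in $x$ against $b_{\k,\mu,\nu} W_{\k,\mu,\nu}(x)\,dx$, swapping the order by Fubini, using the symmetry to convert the inner $x$-integral of $|L_x^{\k,\mu,\nu} g(y)|^r$ into one of $|L_y^{\k,\mu,\nu} g(x)|^r$, and applying the uniform $L^r$-bound (now viewed in the $y$-slot) contributes an extra factor of $\|g\|_{w_{\l_{\k,\mu,\nu}},r}^r$; collecting exponents via $r + rp/q' = p = q + qp/r'$ then delivers \eqref{eq:h-Young}.

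The main obstacle is the symmetry identity $L_x^{\k,\mu,\nu} g(y) = L_y^{\k,\mu,\nu} g(x)$: it is transparent from the symmetry of the reproducing kernel for $g$ a Gegenbauer polynomial, but propagating it to arbitrary $g \in L^r(w_{\l_{\k,\mu,\nu}};[-1,1])$ requires a careful density argument on the one-dimensional weighted space, combined with the uniform $L^r$-bound to pass to the limit. Once that step and the Jensen/normalization estimate are in place, the remaining manipulations are the standard three-line Young argument.
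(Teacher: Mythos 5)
Your proof is correct and follows essentially the same route as the paper: both arguments reduce the inequality, via the standard Young/H\"older scheme, to the key uniform bound $\| L_x^{\k,\mu,\nu} g \|_{W_{\k,\mu,\nu}, r} \le \|g\|_{w_{\l_{\k,\mu,\nu}},r}$, obtained from the positivity of $V_\k$ together with \eqref{eq:IntGx} applied with $P_n \equiv 1$. The only differences are cosmetic: you prove the key bound for all $1\le r<\infty$ at once by Jensen's inequality (using that $L_x^{\k,\mu,\nu}$ integrates $g$ against a probability measure, since $L_x^{\k,\mu,\nu}1 = 1$), whereas the paper handles the endpoints $r=1$ and $r=\infty$ and interpolates by log-convexity of the $L^r$-norm, and you spell out the kernel-symmetry and density step that the paper leaves implicit in the phrase ``following the standard proof of Young's inequality.''
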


\begin{proof}
Following the standard proof of Young's inequality, it is sufficient to show that 
$\| L_x^{\k,\mu,\nu} g \|_{W_{\k,\mu,\nu}, r} \le \|g\|_{w_{\k,\mu,\nu}, r}$ for $1 \le r \le \infty$. 
Since $V_\k$ is nonnegative, $|V_\k g| \le V_k(|g|)$, it follow that $| L_x^{\k,\mu,\nu} g | \le 
L_x^{\k,\mu,\nu} (|g|)$.  Hence, the inequality \eqref{eq:Gg-bound} holds for $p = \infty$ directly by the 
definition and for $p =1$ by applying \eqref{eq:IntGx}. The log-convexity of the $L^r$-norm establishes the
case for $1 < r < \infty$. 
\end{proof}

\begin{prop}\label{prop:conv-justify}
Let $\nu, \mu \ge 0$ and let $\wh g_n^\l$ be the Fourier coefficient of $g$ defined in \eqref{eq:g-Fourier}. Then
for $f \in L^1(W_{\k,\mu,\nu},\BB^d)$ and $g \in L^1(w_{\lambda_{\k,\mu,\nu}}; [-1,1])$,
$$
  \proj_n^{\k,\mu,\nu} (f *_{\k,\mu,\nu} g) (x) =  \wh g_n^{\l_{\k,\mu,\nu}} \proj_n^{\k,\mu,\nu} f(x)  
$$
\end{prop}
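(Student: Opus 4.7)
The plan is to prove the identity first for polynomial $g$, using equation \eqref{eq:kernel-kernel}, and then extend to all $g \in L^1(w_{\l_{\k,\mu,\nu}}; [-1,1])$ by a density argument grounded in Proposition~\ref{prop:Young}. The crucial input is the preceding lemma (equation \eqref{eq:IntGx}), which already packages the key interaction between $L_x^{\k,\mu,\nu}g$ and orthogonal polynomials of degree $n$.

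First I would fix $g$ a polynomial and expand it in Gegenbauer polynomials with respect to $w_\l$ (writing $\l = \l_{\k,\mu,\nu}$): $g = \sum_{k=0}^M \wh g_k^\l Z_k^\l$ as in \eqref{eq:g-Fourier}. Applying the linear operator $L_x^{\k,\mu,\nu}$ term by term and using \eqref{eq:kernel-kernel}, I obtain $L_x^{\k,\mu,\nu} g(y) = \sum_{k=0}^M \wh g_k^\l P_k(W_{\k,\mu,\nu}; x, y)$. Substituting into the definition of the convolution and interchanging the finite sum with the integral then yields
\[
(f *_{\k,\mu,\nu} g)(x) = \sum_{k=0}^M \wh g_k^\l \, \proj_k^{\k,\mu,\nu} f(x),
\]
for any $f \in L^1(W_{\k,\mu,\nu}, \BB^d)$. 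Applying $\proj_n^{\k,\mu,\nu}$ and using the orthogonality $\proj_n^{\k,\mu,\nu}(\proj_k^{\k,\mu,\nu} f) = \delta_{n,k}\proj_k^{\k,\mu,\nu} f$ of distinct subspaces $\CV_k^d(W_{\k,\mu,\nu})$ gives the desired identity whenever $g$ is a polynomial.

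To extend to general $g \in L^1(w_\l)$, I would approximate $g$ in $L^1(w_\l)$-norm by polynomials $g_N$. By Proposition~\ref{prop:Young} with $p=q=r=1$, $f *_{\k,\mu,\nu} g_N \to f *_{\k,\mu,\nu} g$ in $L^1(W_{\k,\mu,\nu}, \BB^d)$. Since the reproducing kernel $P_n(W_{\k,\mu,\nu}; x, \cdot)$ is a polynomial on the compact set $\BB^d$, for each fixed $x$ the map $h \mapsto \proj_n^{\k,\mu,\nu} h(x)$ is a bounded linear functional on $L^1(W_{\k,\mu,\nu},\BB^d)$, so the left-hand side converges pointwise. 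For the right-hand side, the uniform bound $|C_n^\l(t)/C_n^\l(1)| \le 1$ on $[-1,1]$ (valid since $\l = \l_{\k,\mu,\nu} > 0$) shows that $\wh{\,\cdot\,}_n^\l$ is a bounded linear functional on $L^1(w_\l)$, so $\wh{(g_N)}_n^\l \to \wh g_n^\l$. Passing to the limit completes the proof.

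The main obstacle, such as it is, lies in the density step: one must verify both continuity assertions carefully, namely that the pointwise projection defines a bounded linear functional on $L^1(W_{\k,\mu,\nu}, \BB^d)$, and that the Gegenbauer Fourier coefficient is bounded on $L^1(w_\l)$. Both follow from uniform boundedness of the relevant kernels, but they need to be stated cleanly for the limiting procedure to be rigorous. Apart from these, the structure of the argument is essentially dictated by the preceding lemma, which already does the analytic work.
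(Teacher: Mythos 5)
Your proof is correct, but it takes a different route from the paper's, which disposes of the proposition in one line: expand $\proj_n^{\k,\mu,\nu}(f *_{\k,\mu,\nu} g)(x)$ as a double integral, exchange the order of integration, and apply the lemma \eqref{eq:IntGx} with the test polynomial $P_n = P_n(W_{\k,\mu,\nu};x,\cdot) \in \CV_n^d(W_{\k,\mu,\nu})$, which produces the factor $\wh g_n^{\l_{\k,\mu,\nu}}$ directly; since \eqref{eq:IntGx} is already established for all $g \in L^1(w_{\l_{\k,\mu,\nu}};[-1,1])$, no further density argument is needed at the proposition level. You instead bypass \eqref{eq:IntGx}: you re-derive its polynomial-case content (the expansion $L_x^{\k,\mu,\nu} g(y) = \sum_k \wh g_k^\l P_k(W_{\k,\mu,\nu};x,y)$ via \eqref{eq:kernel-kernel}), use mutual orthogonality of the spaces $\CV_k^d(W_{\k,\mu,\nu})$ to project, and then pass to general $g$ by density at the level of the proposition itself, using Proposition \ref{prop:Young} with $p=q=r=1$ together with the two continuity facts you verify. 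Both routes are sound, and the trade-off is real: the Fubini exchange the paper invokes requires, for merely $L^1$ data, the bound $\|L_x^{\k,\mu,\nu} g\|_{W_{\k,\mu,\nu},1} \le \|g\|_{w_\l,1}$ (established inside the proof of Proposition \ref{prop:Young}) and, as the integrals are written, the symmetry $L_x^{\k,\mu,\nu} g(y) = L_y^{\k,\mu,\nu} g(x)$ of the kernel; your version replaces this by a trivial finite-sum interchange, isolating all $L^1$ subtleties in a transparent limiting step. The cost is that you duplicate work the paper's lemma already did, since the same expansion-plus-density pattern is exactly how \eqref{eq:IntGx} is proved. One point worth making explicit in your write-up: the bound $|C_n^\l(t)| \le C_n^\l(1)$ indeed requires $\l > 0$, which holds here because $\l_{\k,\mu,\nu} = \nu + \mu + \g_\k + \tfrac{d-1}{2} \ge \tfrac{d-1}{2} > 0$ for $d \ge 2$.
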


This proposition justifies calling $*_{\k,\mu,\nu}$ a convolution. Its proof follows easily from \eqref{eq:IntGx} and 
from exchange of integrals. 

For $\d > 0$, the Ces\`aro $(C,\d)$ means $S_n^\d (W_{\k,\mu,\nu};f)$ of the Fourier orthogonal expansion is defined by 
$$
 S_n^\d (W_{\k,\mu,\nu};f) := \f{1}{\binom{n+\d}{d}} \sum_{k=0}^n \binom{n-k+\d}{n-k} \proj_k^{\k,\mu,\nu} f,
$$
which can be written as an integral of $f$ against the kernel  $K_n^\d(W_{\k,\mu,\nu};x,y)$.
Let $k_n^\d(w_\l; s,t)$ be the Ces\`aro  $(C,\delta)$ means of the Gegenbauer series; then
$$
  k_n^\d (w_\l;s,1) = \frac{1}{\binom{n+\d}{n}} \sum_{k=0}^n \binom{n-k+\d}{n-k} Z_k^\l(s). 
$$
As a consequence of  \eqref{eq:kernel-kernel}, we can write
\begin{equation} \label{eq:cesaro}
   K_n^\d (W_{\k,\mu,\nu}; x,y) = L_x \left[k_n^\d(w_{\l_{\k,\mu,\nu}}; \cdot, 1)\right ](y). 
\end{equation}

\begin{thm}
For $\mu,\nu \ge 0$, the Ces\`aro $(C,\delta)$ means for $W_{\k, \mu,\nu}$ satisfy 
\begin{enumerate} [\quad 1.]
\item if $\d \ge 2 \l_{\k, \nu, \mu} + 1$, then $S_n^\d(W_{\k,\mu,\nu}; f) \ge 0$ if $f(x) \ge 0$;
\item $S_n^\d(W_{\k,\mu,\nu}; f)$ converge to $f$ in $L^1(W_{\k,\mu,\nu}; \BB^d)$ norm or $C(\BB^d)$ norm 
if $\d > \l_{\k,\nu,\mu}$. 
\end{enumerate}
\end{thm}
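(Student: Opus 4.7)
The plan is to reduce both assertions to classical facts about Cesàro means of Gegenbauer expansions, using the kernel identity \eqref{eq:cesaro} together with the convolution machinery of Propositions \ref{prop:Young} and \ref{prop:conv-justify}. Throughout I write $\l = \l_{\k,\mu,\nu}$ for brevity.

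For part (1), I would first observe that by \eqref{eq:cesaro} and the integral representation \eqref{eq:Gg-bound}, the kernel $K_n^\d(W_{\k,\mu,\nu};x,y)$ is an integral of $k_n^\d(w_\l;\zeta,1)$ (with $\zeta = \zeta(x';x,y,u,v,t)$) against the nonnegative weights $(1-t^2)^{\mu-1}$, $u^{\nu-1}(1-u)^{\g_\k+(d-2)/2}$, $(1-v^2)^{\nu-1/2}$, and the nonnegative intertwining operator $V_\k$. Consequently, if $k_n^\d(w_\l;s,1)\ge 0$ for every $s\in [-1,1]$ then $K_n^\d(W_{\k,\mu,\nu};\cdot,\cdot)\ge 0$, and the positivity of $S_n^\d(W_{\k,\mu,\nu};f)$ when $f\ge 0$ is immediate. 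The needed one-variable fact is the classical theorem of Kogbetliantz, which asserts that the $(C,\d)$ kernel of the Gegenbauer $C_n^\l$ expansion is nonnegative as soon as $\d\ge 2\l+1$; applied with $\l = \l_{\k,\mu,\nu}$ this is exactly the hypothesis of part (1).

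For part (2), Proposition \ref{prop:conv-justify} identifies $S_n^\d(W_{\k,\mu,\nu};f)$ with the convolution $f *_{\k,\mu,\nu} k_n^\d(w_\l;\cdot,1)$. Applying Proposition \ref{prop:Young} with $q=p$ and $r=1$ (choosing $p=1$ for the $L^1$ statement and $p=\infty$ for the continuous one) gives
$$
\|S_n^\d(W_{\k,\mu,\nu};f)\|_{W_{\k,\mu,\nu},p}\;\le\;\|f\|_{W_{\k,\mu,\nu},p}\,\bigl\|k_n^\d(w_\l;\cdot,1)\bigr\|_{w_\l,1}.
$$
The classical theory of Gegenbauer series (via sharp estimates on $|k_n^\d(w_\l;\cdot,1)|$, for example Chanillo-Muckenhoupt-type bounds) shows that $\|k_n^\d(w_\l;\cdot,1)\|_{w_\l,1}$ is uniformly bounded in $n$ precisely when $\d>\l$. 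Combined with the trivial fact that $S_n^\d(W_{\k,\mu,\nu};P)=P$ for any polynomial $P$ of degree $\le n$, and the density of polynomials in $L^1(W_{\k,\mu,\nu},\BB^d)$ and in $C(\BB^d)$, a standard Banach-Steinhaus argument yields the asserted convergence in both norms.

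The main obstacle is extracting the sharp one-variable estimate at the critical index (which the statement sidesteps by imposing strict inequality $\d>\l$) and verifying that the degenerate cases $\mu=0$ or some $\k_i=0$, in which an inner integral in \eqref{eq:Gg-bound} collapses under the limit \eqref{eq:limit-0}, do not compromise either the pointwise domination $|L_x^{\k,\mu,\nu} g|\le L_x^{\k,\mu,\nu}(|g|)$ or the integral identity \eqref{eq:IntGx} on which Propositions \ref{prop:Young} and \ref{prop:conv-justify} rest; both points are settled by a routine continuity argument in the parameters $\mu,\k_i\to 0^+$.
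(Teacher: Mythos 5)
Your proposal is correct and follows essentially the same route as the paper: part (1) via the positivity of the operator $L_x^{\k,\mu,\nu}$ in \eqref{eq:cesaro} combined with the classical nonnegativity of the Gegenbauer Ces\`aro kernel for $\d \ge 2\l+1$, and part (2) by reducing the uniform boundedness of the means to the one-variable bound $\sup_n \|k_n^\d(w_\l;\cdot,1)\|_{w_\l,1} < \infty$ for $\d > \l$. Your packaging through Young's inequality (Proposition \ref{prop:Young}) is the same mechanism as the paper's direct estimate of $\max_{x}\int_{\BB^d}|K_n^\d(W_{\k,\mu,\nu};x,y)|\,W_{\k,\mu,\nu}(y)\,dy$, since Proposition \ref{prop:Young} itself rests on \eqref{eq:IntGx} with $P_n(y)=1$, which is exactly how the paper bounds the kernel integral.
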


\begin{proof}
The first assertion follows immediately from the non-negativity of the Gegenbauer series \cite{Gas}. For the 
second one, it is sufficient to show that 
$$
   \max_{x \in \BB^d} \int_{\BB^d} |K_n^\d(W_{\k,\mu,\nu}; x,y)| W_{\k,\mu,\nu}(y)dy 
$$
is bounded, which can be deduced easily from the fact that the integral of $|k_n^\d(w_\l; t,1)|$ against 
$w_\l$ is bounded if $\d > \l$ by using \eqref{eq:cesaro} and applying \eqref{eq:IntGx} with $P_n(y) =1$. 
\end{proof}
 
In the case of $\k =0$, it is shown in \cite{X14} that $\d > \nu + \mu + \f{d-1}{2}$ is also necessary for 
the second item in the above theorem. However, for $\nu = 0$, the necessary and sufficient condition is known 
in the case of $G= \ZZ_2^d$ as $\d > \s_{\k,\mu}: = \gamma_\k - \min_{1 \le i \le d} \k_i + \mu + \f{d-1}{2}$
(\cite{DaiX09}), which requires delicate estimate of the $(C, \d)$ kernel based on the explicit formula in Corollary
\ref{cor:kernelZ2d}. We expect that the necessary and sufficient condition for the second item of the theorem
is $\d > \nu + \sigma_{\k,\mu}$. 

We can also define the Poisson integral for $f \in L^1(W_{\k,\mu,\nu}, \BB^d)$ by 
\begin{equation*} 
        P_r (W_{\k,\mu,\nu}; f)  :=   f *_{\k,\mu,\nu} P_r^{\k,\mu,\nu}, 
\end{equation*}
where $0<r<1$ and  the kernel $P_r^{\k,\mu,\nu}$ is defined by 
\begin{equation*} 
      P_r^{\k,\mu,\nu}(x, y) := L_x ^{\k,\mu,\nu} P_r, \qquad P_r(t) = \frac{1 -r^2}{ (1 - 2r t + r^2)^{\l_{\k,\mu,\nu}+1}}.
\end{equation*}
The Poisson kernel is non--negative and it satisifies 
$$
  P_r^{\k,\mu,\nu}(x,y) =  \sum_{n=0}^\infty  P_n (W_{\k,\mu,\nu}; x,y) r^n, \qquad 0 < r < 1. 
$$
The standard proof for the Poisson integral of orthogonal expansions leads to: 
 
\begin{thm} \label{thm:h-poisson}
For $f \in L^p( W_{\k,\mu,\nu}, \BB^d )$ if $1 \le p < \infty$, or $f \in C(\BB^d)$ if $p = \infty$,
$\lim_{r \to 1-} \|P_r(W_{\k,\mu,\nu}; f) - f\|_{W_{\k,\mu,\nu},p} = 0$.
\end{thm}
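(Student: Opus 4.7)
The plan is to run the standard approximate-identity argument for Poisson integrals, with all three needed ingredients already supplied earlier in this section: the Young-type contraction \eqref{eq:h-Young}, the spectral identification in Proposition \ref{prop:conv-justify}, and density of polynomials.

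The first step is to compute the Fourier--Gegenbauer coefficients of the scalar kernel $P_r(t) = (1-r^2)/(1-2rt+r^2)^{\l+1}$, where $\l := \l_{\k,\mu,\nu}$. Differentiating the classical generating identity $(1-2rt+r^2)^{-\l} = \sum_n C_n^\l(t) r^n$ in $r$ yields
\[
  P_r(t) = \sum_{n=0}^\infty Z_n^\l(t)\, r^n, \qquad 0 < r < 1,
\]
so in the notation of \eqref{eq:g-Fourier} we have $\wh{(P_r)}_n^\l = r^n$ for every $n$. Since $\l_{\k,\mu,\nu} > 0$ under the standing hypothesis $\mu, \nu \ge 0$, the kernel $P_r$ is non-negative on $[-1,1]$, and taking the $n=0$ coefficient gives $\|P_r\|_{w_\l,1} = 1$ uniformly in $r$.

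Next, Proposition \ref{prop:Young} applied with $q = p$ and convolution exponent $1$ (including $p = \infty$, which is covered by the proof of that proposition through $|L_x^{\k,\mu,\nu} g| \le L_x^{\k,\mu,\nu}(|g|)$) produces the uniform contraction
\[
  \|P_r(W_{\k,\mu,\nu}; f)\|_{W_{\k,\mu,\nu},p} = \|f *_{\k,\mu,\nu} P_r^{\k,\mu,\nu}\|_{W_{\k,\mu,\nu},p} \le \|f\|_{W_{\k,\mu,\nu},p}
\]
for every $r \in (0,1)$. For convergence on polynomials, if $f$ has total degree $N$, Proposition \ref{prop:conv-justify} combined with Step~1 gives the finite sum
\[
  P_r(W_{\k,\mu,\nu}; f)(x) = \sum_{n=0}^{N} r^n\, \proj_n^{\k,\mu,\nu} f(x),
\]
which converges to $f(x) = \sum_{n=0}^N \proj_n^{\k,\mu,\nu} f(x)$ uniformly on $\BB^d$ as $r \to 1^-$, hence in every $L^p(W_{\k,\mu,\nu}, \BB^d)$ and $C(\BB^d)$ norm.

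The theorem then follows by a standard $\varepsilon/3$ argument: polynomials are dense in $L^p(W_{\k,\mu,\nu}, \BB^d)$ for $1 \le p < \infty$ and, by the Weierstrass theorem, in $C(\BB^d)$, so the uniform contraction from Step~2 upgrades the convergence on polynomials from Step~3 to convergence for arbitrary admissible $f$. The only point requiring any care is the identity $\|P_r\|_{w_\l,1} = 1$, which rests on the non-negativity of $P_r$ together with the explicit $Z_n^\l$-expansion; beyond this, there is no serious analytic obstacle.
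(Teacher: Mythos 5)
Your proof is correct and is exactly the ``standard proof'' the paper invokes without writing out: the expansion $P_r(t)=\sum_{n\ge0} Z_n^{\l}(t)\,r^n$ with $\l=\l_{\k,\mu,\nu}$, nonnegativity and unit $w_\l$-mass of $P_r$, the contraction $\|f*_{\k,\mu,\nu}P_r\|_{W_{\k,\mu,\nu},p}\le\|f\|_{W_{\k,\mu,\nu},p}$ from Proposition \ref{prop:Young} (with $q=p$, $r=1$), exact reproduction up to factors $r^n$ on polynomials via Proposition \ref{prop:conv-justify}, and a density/$\varepsilon$-argument. The only, inconsequential, slip is attributing $P_r\ge 0$ to $\l_{\k,\mu,\nu}>0$: in fact $P_r\ge0$ because $1-2rt+r^2=(1-r)^2+2r(1-t)>0$ on $[-1,1]$ for $0<r<1$, while $\l_{\k,\mu,\nu}>0$ is what makes the $Z_n^\l$-expansion and the weight $w_\l$ meaningful.
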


\section{Orthogonal polynomials and expansions on the simplex}
\setcounter{equation}{0}

There is a close relation between orthogonal polynomials on the unit ball $\BB^d$ and those on the simplex 
$$
 \TT^d: = \{x \in \RR^d: x_1 \ge 0, \ldots, x_d \ge 0, 1- |x| \ge 0\}, \quad |x|: = x_1+\ldots + x_d,
$$
under the mapping $\psi : x \in \BB^d \mapsto (x_1^2,\ldots, x_d^2) \in \TT^d$. Assume that $h_\k$ is the
reflection invariant weight function in \eqref{eq:h-weight} that is also invariant under $\ZZ_2^d$, which
means that the reflection group $G$ is a semi-product of a reflection group $G_0$ and $\ZZ_2^d$. Associated 
to this weight function, we define a weight function $U_{\k,\mu,\nu}$ on the simplex $\TT^d$ by 
\begin{equation} \label{eq:weightTh}
  U_{\k,\mu,\nu}(x) = h_\k(\sqrt{x_1}, \ldots, \sqrt{x_d}) |x|^{\nu} (1-|x|)^{\mu-1/2}, \quad \nu + \g_\k+ d/2 >0,\, \mu > -1/2,
\end{equation}
which means that $W_{\k,\mu,\nu}(x) = (U_{\k,\mu,\nu} \circ \psi)(x) |x_1\cdots x_d|$, where $W_{\k,\mu,\nu}$ is the 
weight function in \eqref{eq:Wball} on $\BB^d$. In the case of $h_\k$ 
in \eqref{eq:h-Z2d} associated to the group $\ZZ_2^d$, the weight function is 
\begin{equation} \label{eq:weightT}
   U_{\k,\mu,\nu}(x) =\prod_{i=1}^d x_i^{\k_i-1/2} |x|^{\nu} (1-|x|)^{\mu-1/2}, \quad \k_i \ge 0,
\end{equation}
which is the classical Jacobi weight function when $\nu =0$. The case $\nu \ne 0$ has not been considered up
to now. 

With respect to $U_{\k,\mu,\nu}$ we define the inner product on $T^d$ by
$$
   \la f,g\ra_{\k,\mu,\nu}^T: = b_{\k,\nu,\mu} \int_{\TT^d} f(x) g(x)  U_{\k,\mu,\nu}(x) dx. 
$$
Let $\CV_n^d(U_{\k,\mu,\nu}, \TT^d)$ be the space of orthogonal polynomials with respect to this inner product. 
It can be shown, as in the case of $\nu =0$ (cf. \cite[Sect. 4.4]{DX}), that $\la f,g\ra_{\k,\mu,\nu}^T = 
 \la f\circ \psi,g \circ \psi \ra_{\k,\mu,\nu}$, where $ \la f,g\ra_{\k,\mu,\nu}$ 
is the inner product on $\BB^d$ defined in \eqref{eq:ipd} and, as a consequence, $\psi$ induces a one-to-one
correspondence between $P \in \CV_n^d(U_{\k,\mu,\nu}, \TT^d)$ and $P \circ \psi \in G\CV_{2n}^d(W_{\k,\mu,\nu})$,
the subspace of $\CV_n^d(W_{\k,\mu,\nu})$ that contains polynomials invariant under $\ZZ_2^d$.  In particular, 
let $G\CH_m^d(h_\k^2)$ be the space that contains $h$-spherical harmonics in $\CH_m^d(h_\k^2)$ that are 
invariant under $\ZZ_2^d$. 

\begin{prop}
For $0 \le j \le n$, let $\{Y_{\ell,2n-2j}^h: 1 \le \ell \le \binom{n-j+d-1}{n-j}\}$ be an orthonormal basis of 
$G\CH_{2n-2j}^d(h_\k^2)$. Define 
\begin{equation}\label{eq:OPbasisT}
   P_{j,\ell}^n(U_{\k,\mu,\nu}; x) :=
          P_n^{(\mu-\f12, n-2j+\nu+\l_\k)} (2|x|^2-1) Y_{\ell, 2n-2j}^h(\sqrt{x_1},\ldots,\sqrt{x_d}).
\end{equation}
Then the set $\{P_{j,\ell}^n(U_{\k,\mu,\nu}): 1 \le \ell \le \s_{n-2j}^d, 0 \le j \le n\}$ is a mutually orthogonal basis of 
$\CV_n^d(U_{\k,\mu,\nu}, \TT^d)$.
\end{prop}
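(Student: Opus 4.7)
The plan is to pull the problem back to the unit ball via the map $\psi$ and invoke Proposition~\ref{prop:basis} at level $2n$. The first step is to record the inner-product identity
\[
  \la f, g\ra_{\k,\mu,\nu}^T \;=\; \la f\circ\psi,\, g\circ\psi\ra_{\k,\mu,\nu},
\]
which is already stated in the text. It follows from the change of variables $y=\psi(x)$ together with the factorisation $W_{\k,\mu,\nu}(x)=(U_{\k,\mu,\nu}\circ\psi)(x)\,|x_1\cdots x_d|$: since $(f\circ\psi)(g\circ\psi)\,W_{\k,\mu,\nu}$ is $\ZZ_2^d$-invariant, the integral over $\BB^d$ equals $2^d$ times the integral over $\BB^d_+$, and the Jacobian $2^d x_1\cdots x_d$ of $\psi|_{\BB^d_+}$ absorbs the monomial factor to produce exactly the simplex integral for $U_{\k,\mu,\nu}$.

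Because $f\mapsto f\circ\psi$ is a bijection from polynomials of degree at most $n$ on $\TT^d$ onto $\ZZ_2^d$-invariant polynomials of degree at most $2n$ on $\BB^d$, this identity lifts to an inner-product-preserving isomorphism $\CV_n^d(U_{\k,\mu,\nu},\TT^d)\cong G\CV_{2n}^d(W_{\k,\mu,\nu})$, where the right-hand side is the $\ZZ_2^d$-invariant subspace of $\CV_{2n}^d(W_{\k,\mu,\nu})$. I would then specialise Proposition~\ref{prop:basis} to the index $2n$. The Jacobi factor in \eqref{eq:OPbasis} is automatically $\ZZ_2^d$-invariant, and since $\ZZ_2^d\subset G$ acts linearly on each $\CH_{2n-2j}^d(h_\k^2)$, one may choose the orthonormal system $\{Y_{\ell,2n-2j}^h\}$ so that its first $\binom{n-j+d-1}{n-j}$ members form an orthonormal basis of $G\CH_{2n-2j}^d(h_\k^2)$. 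The corresponding subfamily of the ball basis then forms a mutually orthogonal basis of $G\CV_{2n}^d(W_{\k,\mu,\nu})$.

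Finally, a direct computation identifies the pullback: using $|\psi(x)|=\|x\|^2$ for the argument of the Jacobi polynomial and the $\ZZ_2^d$-invariance $Y_{\ell,2n-2j}^h(\sqrt{x_1^2},\ldots,\sqrt{x_d^2})=Y_{\ell,2n-2j}^h(x)$ for the harmonic factor, one verifies that $P_{j,\ell}^n(U_{\k,\mu,\nu};\cdot)\circ\psi = P_{j,\ell}^{2n}(W_{\k,\mu,\nu};\cdot)$. Orthogonality and the spanning property then transfer through the isomorphism of the second paragraph, giving the proposition. The one place that demands care is this last step, namely matching the Jacobi parameter and harmonic degree in \eqref{eq:OPbasisT} with those produced by Proposition~\ref{prop:basis} at index $2n$; everything else is a routine instance of the standard $\BB^d$--$\TT^d$ correspondence.
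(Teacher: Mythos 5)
Your proposal is correct and is essentially the paper's own argument: the paper gives no separate proof of this proposition, deriving it precisely from the isometry $\la f,g\ra_{\k,\mu,\nu}^T=\la f\circ\psi,\,g\circ\psi\ra_{\k,\mu,\nu}$, the induced identification of $\CV_n^d(U_{\k,\mu,\nu},\TT^d)$ with the $\ZZ_2^d$-invariant subspace $G\CV_{2n}^d(W_{\k,\mu,\nu})$, and Proposition~\ref{prop:basis} applied at index $2n$, exactly as you do. One caveat on the step you rightly flag as delicate: the pullback identity $P_{j,\ell}^n(U_{\k,\mu,\nu};\cdot)\circ\psi=P_{j,\ell}^{2n}(W_{\k,\mu,\nu};\cdot)$ holds only after correcting apparent misprints in \eqref{eq:OPbasisT} --- since $|\psi(x)|=\|x\|^2$, matching the degree-$2n$ ball basis of \eqref{eq:OPbasis} forces the Jacobi factor to be $P_j^{(\mu-\f12,\,2n-2j+\nu+\l_\k)}(2|x|-1)$ rather than the printed $P_n^{(\mu-\f12,\,n-2j+\nu+\l_\k)}(2|x|^2-1)$, so a literal ``direct computation'' with the displayed formula would not close, and your write-up should state the corrected form explicitly.
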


The mapping between orthogonal polynomials on the unit ball and those on the simplex extends to the 
reproducing kernels for the respective spaces, which allows us to derive a concise formula for the reproducing
kernel $P_n(U_{\k,\mu,\nu};\cdot,\cdot)$ of $\CV_n^d(U_{\k,\mu,\nu};\TT^d)$, defined similarly as the
one on the unit ball, and $P_n(U_{\k,\mu,\nu})$ is the kernel function for the projection operator 
$\proj_{n,\TT}^{\k,\mu,\nu}: L^2(U_{\k,\mu,\nu}, \TT^d) \mapsto \CV_n^d(U_{\k,\mu,\nu})$.  Indeed, for all 
$x,y \in \TT^d$,  it is known (\cite[Thm. 4.4.5]{DX}) that 
\begin{equation} \label{eq:PnB=T}
  P_n(U_{\k,\mu,\nu};x,y) = 2^{-d} \sum_{\varepsilon \in \ZZ_2^d} P_{2n}(W_{\k,\mu,\nu};\sqrt{x}, \varepsilon \sqrt{y}),
\end{equation}
where $\sqrt{x} = (\sqrt{x_1},\ldots, \sqrt{x_d})$ and $\varepsilon u = (\varepsilon_1 u_1 , \ldots, \varepsilon_d u_d)$. 
This identity allows us to deduce a concise formula for $P_n(U_{\k,\mu,\nu};x,y)$ from
Theorem \ref{thm:repodBall}, in terms of the Gegenbauer polynomial $Z_{2n}^{\l_{\k\nu,\mu}}$, which we can rewrite
in terms of the Jacobi polynomial $P_n^{(\l_{\k,\mu,\nu}, -1/2)}$ by the quadratic transform between these two 
polynomials, that is, 
$$
  Z_{2n}^\l(t) = \frac{2n+\lambda}{\lambda} C_{2n}^{\lambda}(t) =  
p_n^{(\lambda-\frac12,-\frac12)}(1) p_n^{(\lambda-\frac12,-\frac12)}(2 t^2 -1)=: \Xi_n^\l(2t^2-1),   
$$ 
where $p_n^{(a,b)}$ denote the orthonormal Jacobi polynomial of degree $n$. We state this formula explicitly in the 
case of $G = \ZZ_2^d$. Recall that $\l_{\k,\mu,\nu} = \nu + \mu + \g_\k +\tfrac{d-1}{2}$.

\begin{thm} \label{thm:reprodSimplex}
Let $W_{\k,\mu,\nu}$ be given in terms of $h_\k$ defined in \eqref{eq:h-Z2d} and let $\nu > 0$. For
$\k_i \ge 0$ and $\nu \ge 0$, 
\begin{align*}
P_n(U_{\k,\mu,\nu}; x,y)  & =  a_{\k,\mu,\nu}  \int_{-1}^1 \int_0^1 \int_{-1}^1 
 \int_{[-1,1]^d } \Xi_n^{\l_{\k,\mu,\nu}}  (2 \zeta(x,y, u,v, s, t)^2-1) \\
 &   \times \prod_{i=1}^d (1-s_i^2)^{\k_i-1} ds (1-t^2)^{\mu-1} dt u^{\nu-1} (1-u)^{\g_k+\f{d-2}{2}} du (1-v^2)^{\nu-\f12} dv,\notag
\end{align*}
which holds under the limit \eqref{eq:limit-0} when $\mu$ or any $\k_i$ is 0, where
$$
 \zeta( x,y, u,v, s, t): = \sqrt{|x|} \sqrt{|y|} u v +   (1-u) \sum_{i=1}^d \sqrt{x_i y_i} s_i + \sqrt{1-|x|} \sqrt{1-|y|^2}\, t.
$$
\end{thm}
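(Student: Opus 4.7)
The plan is to reduce the simplex kernel to the ball kernel via the identity \eqref{eq:PnB=T}, substitute the explicit formula of Corollary~\ref{cor:kernelZ2d}, and then carry out a change of variables that lets the sum over $\varepsilon \in \ZZ_2^d$ collapse to a factor of $2^d$ which cancels the $2^{-d}$ in front.

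First, I would apply \eqref{eq:PnB=T} and plug in the formula from Corollary~\ref{cor:kernelZ2d} for $P_{2n}(W_{\k,\mu,\nu};\sqrt{x},\varepsilon\sqrt{y})$. Since $\|\sqrt{x}\|^2=|x|$ and $\|\varepsilon\sqrt{y}\|^2=|y|$, the only place where $\varepsilon$ enters the integrand is through the argument
$$
\zeta(\sqrt{x},\varepsilon\sqrt{y},u,v,s,t)
 = \sqrt{|x|}\sqrt{|y|}\,uv + (1-u)\sum_{i=1}^d \sqrt{x_i y_i}\,\varepsilon_i s_i + \sqrt{1-|x|}\sqrt{1-|y|}\,t.
$$

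Next, for each fixed $\varepsilon$, I would perform the change of variable $s_i \mapsto \varepsilon_i s_i$ in the inner integral over $[-1,1]^d$. This map is measure-preserving, leaves $(1-s_i^2)^{\k_i-1}$ invariant, and converts the argument of $Z_{2n}^{\l_{\k,\mu,\nu}}$ to $\zeta(\sqrt{x},\sqrt{y},u,v,s,t)$ (no $\varepsilon$). The only factor that gets rearranged is $\prod_i(1+s_i)$, which becomes $\prod_i(1+\varepsilon_i s_i)$. Summing over $\varepsilon \in \ZZ_2^d$ and factoring,
$$
\sum_{\varepsilon \in \ZZ_2^d} \prod_{i=1}^d (1+\varepsilon_i s_i)
= \prod_{i=1}^d \bigl((1+s_i)+(1-s_i)\bigr) = 2^d,
$$
which cancels exactly with the $2^{-d}$ prefactor in \eqref{eq:PnB=T}. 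The resulting integral has the weight $\prod_i(1-s_i^2)^{\k_i-1}$ without the asymmetric factor $\prod_i(1+s_i)$, and its argument is the symmetric function $\zeta(\sqrt{x},\sqrt{y},u,v,s,t)$ stated in the theorem.

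Finally, I would apply the quadratic transformation
$$
Z_{2n}^{\l_{\k,\mu,\nu}}(\tau)
= \Xi_n^{\l_{\k,\mu,\nu}}(2\tau^2 -1)
$$
recalled just before the statement, to rewrite $Z_{2n}^{\l_{\k,\mu,\nu}}(\zeta)$ as $\Xi_n^{\l_{\k,\mu,\nu}}(2\zeta^2-1)$, giving the displayed formula; the normalization constant $a_{\k,\mu,\nu}$ is fixed by evaluating both sides at $n=0$. The limiting interpretation \eqref{eq:limit-0} handles the boundary cases $\mu = 0$ or $\k_i = 0$ by a standard limit.

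The only step that requires real care is the combined change-of-variables and $\varepsilon$-sum, since it must simultaneously symmetrize the $s$-integrand and remove the factor $\prod_i(1+s_i)$; this is where the hypothesis that $h_\k$ in \eqref{eq:h-Z2d} (so that $G$ contains $\ZZ_2^d$) is essential, and where care is needed in the order of integration and summation. The remaining manipulations are routine applications of results already established in the paper.
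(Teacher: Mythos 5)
Your proposal is correct and takes essentially the same route the paper intends: the theorem is stated as a consequence of \eqref{eq:PnB=T}, the ball kernel of Corollary~\ref{cor:kernelZ2d}, and the quadratic transform $Z_{2n}^{\l}(t)=\Xi_n^{\l}(2t^2-1)$, and your change of variables $s_i\mapsto\varepsilon_i s_i$ together with $\sum_{\varepsilon\in\ZZ_2^d}\prod_{i=1}^d(1+\varepsilon_i s_i)=2^d$ merely makes explicit the symmetrization step the paper leaves implicit. As a minor point in your favor, your computation correctly produces $\sqrt{1-|x|}\sqrt{1-|y|}\,t$ in $\zeta$, so the factor $\sqrt{1-|y|^2}$ in the paper's displayed formula is evidently a typo.
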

 
In the case $\nu =0$, this formula and its version for more general $h_\k$ are known (cf. \cite{X01}); the case 
$\nu \ne 0$ is new. We can also define a convolution $*^\TT_{\k,\mu,\nu}$ between $f \in L^1(U_{\k,\mu,\nu}; \TT^d)$
and $g \in L^1(w_{\l_{\k,\mu,\nu} -\f12, -\f12}, [-1,1])$, where $w_{a,\b}(t) : = (1-t)^a (1+t)^b$. In fact, it can be defined 
as follows: 
$$
(f *^\TT_{\k,\mu,\nu} g \circ \psi )(x) := (f\circ \psi) *_{\k,\mu,\nu} g(2 \{\cdot \}^2-1) (x), 
$$
where the convolution in the right hand side is the one defined in Section 3. The properties of this convolution can 
then be deduced from the corresponding results on the unit ball. In particular,  Proposition \ref{prop:Young} holds
with the norm of $\|\cdot\|_{U_{\k,\mu,\nu},p}$ and $\|\cdot \|_{w_{\l_{\k,\mu,\nu} -\f12, -\f12}, p}$. Much of the 
analysis from this point on can be carried out from the correspondence between analysis on the ball and on the
simplex, just as in the case of $\nu = 0$. We conclude this section with a result on summability. 
 
Let $S_n^\d(U_{\k,\mu,\nu}; f)$ be the Ces\`aro $(C,\delta)$ means of the Fourier orthogonal expansion with
respect to $U_{\k,\mu,\nu}$ on $\TT^d$ and let $K_n^\d(U_{\k,\mu,\nu}; \cdot,\cdot)$ be its kernel, both are 
defined similarly as the corresponding ones on the unit ball. In particular, we can also write
$$
  S_n^\d(U_{\k,\mu,\nu}; f) = f *^\TT_{\k,\mu,\nu} k_n^\d (w_{\l_{k,\mu,\nu} -\f12, \f12}),
$$
where $k_n^\d (w_{a,b}; s, t) = k_n^\d (w_{a,b}; s, 1)$ denotes the Ces\`aro $(C,\d)$ kernel of the Jacobi series 
for $w_{a,b}$ on $[-1,1]$ with one variable evaluated at $1$.  

\begin{thm}
For $\l \ge 0$ and $\mu \ge 0$, the Ces\`aro $(C,\delta)$ means for $U_{\k, \l,\mu}$ satisfy 
\begin{enumerate} [\quad 1.]
\item if $\d \ge 2 \l_{\k,\mu,\nu} +1$, then $S_n^\d(U_{\k,\mu,\nu}; f) \ge 0$ if $f(x) \ge 0$;
\item $S_n^\d(U_{\k,\mu,\nu}; f)$ converge to $f$ in $L^1(U_{\k,\mu,\nu}; \TT^d)$ norm or $C(\TT^d)$ norm 
if $\d > \l_{\k,\nu,\mu}$. 
\end{enumerate}
\end{thm}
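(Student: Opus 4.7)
The argument parallels the proof on the ball, with the Cesàro kernel of a Jacobi series now playing the role of the Cesàro kernel of a Gegenbauer series. The kernel reduction is already supplied by the identity
$$
S_n^\delta(U_{\k,\mu,\nu}; f) = f *^{\TT}_{\k,\mu,\nu} k_n^\delta(w_{\l_{\k,\mu,\nu}-\frac12,-\frac12})
$$
stated immediately before the theorem; this identity comes from \eqref{eq:PnB=T}, the definition \eqref{eq:kernel-kernel}, and the quadratic transform $Z_{2n}^{\l}(t) = \Xi_n^{\l}(2t^2-1)$ recorded before Theorem~\ref{thm:reprodSimplex}. Unwinding these gives the explicit representation
$$
K_n^\delta(U_{\k,\mu,\nu}; x, y) = 2^{-d}\sum_{\varepsilon \in \ZZ_2^d} L_{\sqrt{x}}^{\k,\mu,\nu}\bigl[k_n^\delta(w_{\l-\frac12,-\frac12}; 2(\cdot)^2-1, 1)\bigr](\varepsilon\sqrt{y}),
$$
where I write $\l = \l_{\k,\mu,\nu}$.

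For part~(1), the condition $\delta \geq 2\l+1$ dominates $\l + 1 = (\l - \tfrac12) + (-\tfrac12) + 2$, so Gasper's theorem on the non-negativity of $(C,\delta)$ means of Jacobi series gives $k_n^\delta(w_{\l-\frac12,-\frac12}; s, 1) \geq 0$ for all $s \in [-1,1]$ and all $n$. Because $V_\k$, and hence $L_x^{\k,\mu,\nu}$, is a positive integral operator, this non-negativity transfers to $K_n^\delta(U_{\k,\mu,\nu}; x, y) \geq 0$, whence $S_n^\delta(U_{\k,\mu,\nu}; f) \geq 0$ whenever $f \geq 0$.

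For part~(2), by the density of polynomials it suffices to establish the uniform bound
$$
\sup_{n} \sup_{x \in \TT^d} \int_{\TT^d} |K_n^\delta(U_{\k,\mu,\nu}; x, y)| \, U_{\k,\mu,\nu}(y) \, dy < \infty.
$$
Transferring to the ball via $\psi$ and applying the simplex analog of \eqref{eq:IntGx} with $P_n \equiv 1$, together with positivity of $L_x^{\k,\mu,\nu}$, this reduces to the classical estimate
$$
\sup_n \int_{-1}^1 |k_n^\delta(w_{\l-\frac12,-\frac12}; s, 1)| \, w_{\l-\frac12,-\frac12}(s) \, ds < \infty,
$$
which is valid precisely when $\delta > \l$, matching the hypothesis $\delta > \l_{\k,\nu,\mu}$.

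The main obstacle is bookkeeping: one must keep straight the interplay between the convolution $*^{\TT}_{\k,\mu,\nu}$ on $\TT^d$, the convolution $*_{\k,\mu,\nu}$ on $\BB^d$, the mapping $\psi$, and the quadratic transform relating Gegenbauer and Jacobi polynomials, so that the Cesàro kernel on the simplex really does reduce to the Jacobi $(C,\delta)$ kernel for $w_{\l-\frac12,-\frac12}$. Once this reduction is secured, both conclusions follow from the well-established positivity (Gasper) and $L^1$-boundedness results for Cesàro means of Jacobi series applied to the weight $w_{\l-\frac12,-\frac12}$.
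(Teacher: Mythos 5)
Your proposal is correct and follows exactly the route the paper intends: the paper states this theorem without a written proof, relying on the displayed convolution identity, the ball--simplex correspondence \eqref{eq:PnB=T} with the quadratic transform $Z_{2n}^\l(t)=\Xi_n^\l(2t^2-1)$, Gasper's positivity theorem, and the uniform $L^1$ bound for the Jacobi $(C,\d)$ kernel at $1$ (valid for $\d>\a+\frac12=\l_{\k,\mu,\nu}$), all of which you supply, just as the ball theorem was proved via \eqref{eq:cesaro} and \eqref{eq:IntGx}. Two small points in your favor: your consistent use of the weight $w_{\l_{\k,\mu,\nu}-\frac12,-\frac12}$ is the correct one (the paper's display immediately before the theorem writes $w_{\l_{\k,\mu,\nu}-\frac12,\frac12}$, an apparent sign typo inconsistent with its own definition of $*^\TT_{\k,\mu,\nu}$), and your remark that the hypothesis $\d\ge 2\l_{\k,\mu,\nu}+1$ strictly dominates the sharp Jacobi positivity threshold $(\l-\frac12)+(-\frac12)+2=\l_{\k,\mu,\nu}+1$ is accurate --- the paper simply carries over the Gegenbauer condition from the ball.
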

 
We can also define the Poisson integral and establish an analogue of Theorem \ref{thm:h-poisson}.

\end{document}